\numberwithin{equation}{section}
\newtheorem{theorem}{Theorem}[section]
\newtheorem{lemma}{Lemma}[section]
\newtheorem{proposition}{Proposition}[section]
\newtheorem{definition}{Definition}[section]
\newtheorem{remark}{Remark}[section]
\renewcommand{\fnum@algorithm}{}
\def\ns{\noalign{\ss} }
\def\ds{\displaystyle}
\def\ss{\smallskip}
\renewcommand{\@seccntformat}[1]{\csname the#1\endcsname.\hspace{0.5em}}
\title{Lagrangian dual method for solving stochastic linear quadratic optimal control problems with terminal state constraints}
\author{
Haisen Zhang\footnote{School of Mathematical Sciences, Sichuan Normal University, Chengdu 610066, China.
The research of this author is partially supported by NSF of China under grants 12071324 and 11931011. Email:
haisenzhang@yeah.net.}~~and~~Xianfeng Zhang\footnote{School of Mathematics, Sichuan University, Chengdu 610068, China. The research of this author is partially
supported by NSF of China under grant 11971334. Email: math\_xianfeng@163.com.}}
\date{}
\begin{document}
	\maketitle
\begin{abstract}
A stochastic linear quadratic (LQ) optimal control problem with a pointwise linear equality constraint on the terminal state is considered. A strong Lagrangian duality theorem is proved under a uniform convexity condition on the cost functional and a surjectivity condition on the linear constraint mapping. Based on the Lagrangian duality, two approaches are proposed to solve the constrained stochastic LQ problem. First, a theoretical method is given to construct the closed-form solution by the strong duality. Second, an iterative algorithm, called augmented Lagrangian  method (ALM), is proposed. The strong convergence of the iterative sequence generated by ALM is proved. In addition, some sufficient conditions for the surjectivity of the constraint mapping are obtained.
\end{abstract}

\noindent {\bf Key words:}
Stochastic linear quadratic optimal control problem, Lagrangian duality, Riccati equation, augmented Lagrangian method, rank condition.

\vspace{+0.3em}

\noindent {\bf AMS subject classifications:} 93E20, 49N10, 49N15, 49M37.

	\section{Introduction}\label{sec1}
Let $T>0$ and $(\Omega, \mathcal{F},  \mathbb{F}, \mathbb{P})$  be a complete filtered
probability space with the filtration $\mathbb{F}=\{\mathcal{F}_{t} \}_{0\le t\le T}$ (satisfying the usual conditions), on which a one-dimensional standard  Wiener
process $W(\cdot)$ is defined such that $\mathbb{F}$ is the natural filtration generated by $W(\cdot)$ (augmented by all the $\mathbb{P}$-null sets). Fix $m,n,\ell\in \mathbb{N}$. Denote by $\langle{\cdot} , {\cdot}\rangle$ and $|\cdot|$ respectively the inner product and norm in $\mathbb{R}^{m}$, $\mathbb{R}^{n}$  and $\mathbb{R}^{\ell}$, which can be identified from the contexts.

Let us consider the controlled linear stochastic differential equation
\begin{align}\label{eq controlsys}
\left\{
\begin{array}{l}	\mathrm{d} X^{x,u}(t)=\big(A(t)X^{x,u}(t)+B(t)u(t)\big)\mathrm{d}t+\big(C(t)X^{x,u}(t)+D(t)u(t)\big)\mathrm{d}W(t), \quad t\in[0,T]\\
\ns
\ds
X(0)=x
\end{array}
\right.
\end{align}
with the quadratic cost functional
\begin{equation}\label{costfun}
J(u)=\frac{1}{2}\mathbb{E}\Bigg[\int_{0}^{T}\langle Q(t)X^{x,u}(t),X^{x,u}(t)\rangle+\langle R(t)u(t),u(t)\rangle\mathrm{d}t+\langle GX^{x,u}(T),X^{x,u}(T)\rangle\Bigg]
\end{equation}
and the terminal state constraint
\begin{equation}\label{stateconstr}
M X^{x,u}(T)-b=0,\quad a.s.
\end{equation}
Here,  $A,C:[0,T]\times\Omega\to \mathbb{R}^{n\times n}$ and $B,D:[0,T]\times\Omega\to \mathbb{R}^{n\times m}$ are given matrix-valued stochastic processes, $u(\cdot)\in L^2_{\mathbb{F}}(0,T;\mathbb{R}^m)$ is the control where $L^2_{\mathbb{F}}(0,T;\mathbb{R}^m)$ is the space of $\mathbb{F}$-progressively measurable stochastic processes valued in $\mathbb{R}^m$ such that $\mathbb{E}\int_{0}^{T}|u(t)|^2dt<\infty$, $x\in \mathbb{R}^n$  and $X^{x,u}(\cdot)$ is the state valued in $\mathbb{R}^n$ with initial datum $x$ and control $u(\cdot)$. $G:\Omega\to  \mathbb{R}^{n\times n}$ is a  given matrix-valued random variable, $Q: [0,T]\times\Omega\to \mathbb{R}^{n\times n}$ and $R: [0,T]\times\Omega\to \mathbb{R}^{m\times m}$ are given matrix-valued stochastic processes. $M:\Omega\to\mathbb{R}^{\ell\times n} (\ell\leq n)$ is a given matrix-valued random variable and $b:\Omega \to\mathbb{R}^\ell$ is a given random vector.

The constrained stochastic linear quadratic (LQ) optimal control problem  considered in this paper is
\begin{equation*}
\quad \left\{
\begin{array}{ll}
	\min  &J(u),\\[+0.3em]
\text{s.t. }&u\in L^2_{\mathbb{F}}(0,T;\mathbb{R}^m),\\[+0.3em]
                  & 	M X^{x,u}(T)-b=0,\quad a.s.
\end{array}
\right. \tag{CSLQ}
\end{equation*}
The (CSLQ) is feasible if  there is a control $u(\cdot)\in L^2_{\mathbb{F}}(0,T;\mathbb{R}^m)$ with corresponding state $X^{x,u}(\cdot)$ such that the state constraint \eqref{stateconstr} is satisfied. Any minimizer $\bar{u}(\cdot)$ of (CSLQ) is called an optimal control, the corresponding state process $\bar{X}^{x,\bar{u}}(\cdot)$ is called an optimal state, and $(\bar{X}^{x,\bar{u}}(\cdot), \bar{u}(\cdot))$ is called an optimal pair.

Stochastic LQ problem is one of the fundamental problems in stochastic control theory and has wide range of applications in many fields, such as engineering, management science and mathematical finance. The stochastic  LQ problem without constraint was initiated by Wonham \cite{Wonham1968} and  studied extensively for both deterministic and random coefficients by many researchers in the past few decades. For instance, Bismut \cite{Bismut1976} first studied the stochastic LQ problem with random coefficients. Chen, Li and Zhou \cite{ChenLiZhou1998} found for the first time that stochastic LQ problem with indefinite control weight cost may still be well-posed, which is essentially different from its deterministic counterpart. Rami, Moore and Zhou \cite{RamiMooreZhou2001} proposed a much general Riccati equation with an additional algebraic equality constraint and proved that the solvability of the generalized Riccati equation is sufficient and necessary for the well-posedness of the indefinite stochastic  LQ problem with deterministic coefficients. Tang \cite{Tang2003,Tang2015} proved the existence and uniqueness of the  solution to the backward stochastic Riccati equation for stochastic LQ problem with random coefficients under the regular case that the wight matrix $R$ is uniformly positive definite.   Kohlmann and Tang \cite{KohlmannTang2003}, Hu and Zhou \cite{HuZhou2003} studied the the existence and uniqueness of the  solution to the backward stochastic Riccati equation for stochastic LQ problem with random coefficients in some special indefinite cases. Sun, Li and Yong  \cite{SunLiYong2016}, Sun, Xiong and Yong \cite{SunXiongYong2021}, Sun and Yong \cite{SunYong2020book} studied the relationship between the open-loop solvability and closed-loop solvability for stochastic LQ problems.

In many applications, the control or/and  the state of the control system should satisfy some constraints. Obviously, solving the constrained stochastic LQ problems is more challenging than solving the problems without constraints.  The stochastic linear quadratic optimal problem with cone control constraints and random coefficients was studied by  Hu and Zhou \cite{HuZhou2005}.  An explicit optimal feedback control was obtained in \cite{HuZhou2005} by introducing two extended stochastic Riccati equations. Chen and Zhou \cite{ChenZhou2004} discussed the  stochastic linear quadratic optimal problem in infinite time horizon with conic control constraints.  Recently, Hu, Shi and Xu studied  in \cite{HuShiXu2022aap}  and  \cite{HuShiXu2022cocv} respectively the finite time horizon and infinite time horizon stochastic linear-quadratic optimal control problem with regime switching, random coefficients and cone control constraints. Lim and Zhou \cite{LimZhou1999} studied the stochastic LQ problem with mixed control-state integral type  quadratic inequality constraints.  Wu, Gao, Lu and Li \cite{WuGaoLuLi2020} discussed the scalar-state stochastic LQ optimal control problems with mixed pointwise state-control linear inequality constraints. Feng, Hu and Huang \cite{FengHuHuang2022} considered a stochastic LQ problem with a terminal state affine expectation constraint  when they studied the backward Stackelberg differential game involving a single leader and single follower.

In contrast with the stochastic LQ problems with inequality   state constraints or  mixed control-state  inequality   constraints, less progress has been made on the stochastic LQ problems with equality  state constraints. Lim \cite{Lim2004} gave a closed-form expression of the optimal portfolios for mean-variance portfolio selection problems in which the state $X^{x,u}(\cdot)$ is a real-scalar process and constrained by the expectation type equality constraint
\begin{equation}\label{exp_cons}
\mathbb{E}X^{x,u}(T)=c
\end{equation}
for some constant $c\in \mathbb{R}$.
Kohlmann and Tang \cite[Section 6.2]{KohlmannTang2003}  studied the general multi-dimensional stochastic LQ problem with state constraint \eqref{exp_cons} (in which $X^{x,u}(T)$ is valued in $\mathbb{R}^{n}$ and $c\in \mathbb{R}^{n}$). A feedback solution with parameter for the optimal control was given by the Riccati equation. Zhang and Zhang \cite{zhang} studied the stochastic LQ problem with state constraint
\begin{equation}\label{Estateconstr}
\mathbb{E}(M X^{x,u}(T)-b)=0
\end{equation}
under the solvability
condition on a stochastic Riccati equation and a surjectivity condition on the linear constraint mapping.  The equivalence between the surjectivity condition
and a Kalman- type rank condition is proved in \cite{zhang} for the special case of deterministic coefficients. In both \cite{Lim2004} and \cite{zhang}, the Lagrangian duality is the main tool to handle the state constraint and the optimal parameter of the feedback optimal solution is determined explicitly by solving the dual problem.

Compared with the expectation type terminal state constraint \eqref{Estateconstr}, the stochastic LQ problems with (sample point) pointwise terminal state constraints are more complicated. First, the pointwise type state constraints are more restrictive than the expectation type state constraints and hence some stronger conditions are needed to ensure the feasibility of the correlated state constrained problems. Second, the dual variable for the  pointwise type  state constraint \eqref{stateconstr} is a random vector and the corresponding dual problem is a stochastic programming problem which is hard to be solved explicitly in general.

In \cite{BiSunXiong2020}, Bi, Sun and Xiong  used the BSDE approach to study a  stochastic linear quadratic optimal problem with deterministic coefficients, fixed terminal state and a pointwise linear equality constraint on the initial state. A closed-form solution is obtained by solving a Riccati equation and an algebraic matrix equation for the optimal parameter. Besides, there are a few articles on stochastic LQ problems with pointwise equality constraints for the special case of the norm optimal control problem, i.e., $M\equiv I_{n}$, $Q(t)\equiv 0$, $R(t)\equiv I_{m}$, $G\equiv 0$ ($I_{n}$ and $I_{m}$ are respectively the identity matrices of size $n$ and $m$). See for instance, Gashi \cite{Gashi2015}, Wang and Zhang \cite{WangZhang2015}, Wang, Yang, Yong and Yu \cite{WangYangYongYu2017}.

In this paper, we shall discuss the stochastic LQ  problem with pointwise linear equality constraint \eqref{stateconstr}. Similar to our previous work \cite{zhang} for expectation type terminal state constraint \eqref{Estateconstr}, we prove a strong Lagrangian duality theorem for the constrained stochastic LQ  problem (CSLQ) under a uniform convexity condition on the cost functional and a surjectivity condition on the linear constraint mapping. An equivalent characterization of the surjectivity condition is discussed by the controllability theory of linear control systems. In addition, a Kalman-type rank condition, which is sufficient for the surjectivity condition, is derived in the special case of deterministic  coefficients. Different from \cite{zhang},  the dual problem for (CSLQ) cannot be solved explicitly by its first-order necessary condition. As a result, the closed-form solution to  (CSLQ) cannot be obtained directly by the dual approach. See Section \ref{sec3} for more details. To overcome that difficulty, we introduce an iterative algorithm, called augmented Lagrangian method (ALM), to solve the constrained stochastic LQ  problem (CSLQ).

The ALM  is originally proposed independently by Hestenes \cite{Hestentes1969} and Powell \cite{Powell1972}  for solving finite dimensional constrained optimization problems and has been extensively studied by many scholars in the past few decades. We refer the reader to \cite{Kunisch1997,Glowinski1983,ItoKunisch1990} and the references cited therein  for its infinite dimensional extensions and applications in numerical solution to partial differential equations and deterministic optimal control problems. Recently, Pfeiffer \cite{Pfeiffer2020} proposed an ALM for solving non-linear stochastic control problems with inequality type terminal state constraints. The cost functional and the inequality constraints in \cite{Pfeiffer2020} are functional of the probability distribution of the terminal state.

In this paper, we show that the ALM is effective to solve the constrained stochastic LQ  problem (CSLQ). Under proper conditions, we prove that the iterative sequence generated by ALM  converges strongly to the optimal control of (CSLQ). The basic idea is form the ALM for the quadratic programming problem with linear equality constraints(see, for instance, Chapter 1 in \cite{Glowinski1983}). Indeed, the cost functional \eqref{costfun} can be represented as a quadratic functional of control though introducing some proper operators, for more details we refer the readers to \cite[Theorem 3.4]{SunXiongYong2021}. Then, the convergence of the ALM for (CSLQ) can be obtained by verifying the convergence conditions of the ALM for quadratic programming problem with linear equality constraints. Instead of using such an abstract approach, in this paper we prove the convergence result directly by the elementary techniques in stochastic control.

The main contributions of this paper are as follows:

\begin{enumerate}[(i)]
  \item The  Lagrangian dual method is proposed to solve the constrained stochastic LQ problem  (CSLQ). First, the closed-form solution with optimal parameter is constructed by the Lagrangian duality principle.
      Then, the ALM is introduced to solve (CSLQ) and the strong convergence of the iterative sequence is proved in a simple and direct way.
  \item Some verifiable sufficient conditions are given to ensure the strong duality between the (CSLQ) and its dual problem. Those sufficient conditions are also the convergence conditions of the ALM for (CSLQ).
  \item As a byproduct of  the convergence proof of the ALM,  a first-order necessary and sufficient condition for the optimal control of (CSLQ) is obtained by the Lagrangian duality theory (See Lemma \ref{lemmasss}).
\end{enumerate}

The rest of this paper is organized as follows. In Section 2 we introduce some basic notations and assumptions. In Section 3 we prove the Lagrangian duality between the (CSLQ) and its dual problem under a uniform convexity condition on the cost functional and a surjectivity condition on the linear
constraint mapping. In Section 4, we   propose the ALM  for (CSLQ)  and prove its strong convergence. Finally  we give some verifiable sufficient conditions for the surjectivity condition of the linear constraint mapping in Section 5.

\section{Preliminaries and assumptions}\label{sec2}
	
Throughout this paper, let $\mathbb{R}^n$, $\mathbb{R}^m$ and $\mathbb{R}^\ell$  be respectively the $n$, $m$ and $\ell$-dimensional Euclidean space. Let $\mathbb{R}^{n\times n}$ and $\mathbb{R}^{n\times m}$ be respectively the sets of all  $n\times n$ and $n\times m$ real matrices. Denoted by $M^{\top}$ the transpose of $M$, by $\mathbb{S}^n$ the space of all symmetric $n\times n$ real matrices. The identity matrix of size $n$ is denoted by $I_{n}$. For $M,N\in \mathbb{S}^n$, denote $M\geq N$ when $M-N$ is positive semidefinite.

For a Banach space $\mathds{X}$ with its norm $|\cdot|_{\mathds{X}}$, denote by $\mathcal{B}_{\mathds{X}}(0,1)$ the open unit ball of $\mathds{X}$. Denote by $L^2_{\mathcal{F}_{T}}(\Omega;\mathds{X})$   the space of $\mathds{X}$-valued, $\mathcal{F}_{T}$ measurable random vectors $\xi$ such that $\|\xi\|_{L_{\mathcal{F}_{T}}^2(\Omega;\mathds{X})}\!\triangleq\!\big[\mathbb{E}|\xi|_{\mathds{X}}^2\big]^{\frac{1}{2}}\!<\!\infty$; by $L^\infty_{\mathcal{F}_{T}}(\Omega;\mathds{X})$ the space of $\mathds{X}$-valued, $\mathcal{F}_{T}$ measurable random vectors $\xi$ such that $\mathop{esssup}_{\omega}|\xi|_{\mathds{X}} \!<\!\infty$; by $L^2_{\mathbb{F}}(0,T;\mathds{X})$ the space of $\mathds{X}$-valued, $\mathbb{F}$-progressively measurable stochastic processes $\eta$ such that $\| \eta\|_{L^{2}_{\mathbb{F}}(0,T;\mathds{X})}\triangleq\big[\mathbb{E}\int_{0}^{T}|\eta(t)|_{\mathds{X}}^2dt\big]^{\frac{1}{2}}<\infty$; by $L^\infty_{\mathbb{F}}(0,T;\mathds{X})$ the space of $\mathds{X}$-valued, $\mathbb{F}$-progressively measurable stochastic processes $\eta$ such that $\mathop{esssup}_{(t,\omega)} |\eta(t)|_{\mathds{X}}<\infty$; by $L_{\mathbb{F}}^2\big(\Omega; C\big([0,T];\mathds{X}\big)\big)$ the space of $\mathds{X}$-valued, $\mathbb{F}$-progressively measurable continuous stochastic processes $\eta$ such that $\big[\mathbb{E}\big(\sup_{0\leq t\leq T}|\eta(t)|_{\mathds{X}}^2\big)\big]^{\frac{1}{2}}<\infty$;
by $L_{\mathbb{F}}^{\infty}\left(\Omega ; C\left([0, T], \mathds{X}\right)\right)$ the space of $\mathds{X}$-valued, $\mathbb{F}$-progressively measurable continuous stochastic processes $\eta$ such that $\mathop{esssup}_{\omega}\big(\sup_{0\leq t\leq T}|\eta(t)|_{\mathds{X}}\big)<\infty$;
by $L_{\mathbb{F}}^{\infty}\left(\Omega; L^{2}\left(0, T ; \mathds{X}\right)\right)$ the space of $\mathds{X}$-valued, $\mathbb{F}$-progressively measurable stochastic processes $\eta$ such that $\mathop{esssup}_{\omega}\big(\int_{0}^{T}|\eta(t)|_{\mathds{X}}^2dt\big)^{\frac{1}{2}}<\infty$.\\

Throughout this paper, we make the following assumptions.
\begin{enumerate}
  \item [ (A1)] $
		A(\cdot),C(\cdot)\in L^{\infty}_{\mathbb{F}}(0,T;\mathbb{R}^{n\times n})$, \qquad $B(\cdot), D(\cdot)\in L^{\infty}_{\mathbb{F}}(0,T;\mathbb{R}^{n\times m})$.
  \item [ (A2)] $G\in L^{\infty}_{\mathcal{F}_{T}}(\Omega;\mathbb{S}^n)$,\quad $Q(\cdot)\in L^{\infty}_{\mathbb{F}}(0,T;\mathbb{S}^n)$,\quad
		$R(\cdot)\in L^{\infty}_{\mathbb{F}}(0,T; \mathbb{S}^m)$, \quad $M\in L^{\infty}_{\mathcal{F}_{T}}(\Omega;\mathbb{R}^{\ell \times n})$.

  \item [(A3)] There is a constant $\delta>0$ such that
\begin{eqnarray}\label{J0}
J^{0}(v)\!\!\!&\triangleq&\!\!\!\frac{1}{2}\mathbb{E}\Bigg[\int_{0}^{T}\langle Q(t)X^{0,v}(t),X^{0,v}(t)\rangle+\langle R(t)v(t),v(t)\rangle\mathrm{d}t+\langle GX^{0,v}(T),X^{0,v}(T)\rangle\Bigg]\nonumber \\[+0.5em]
\!\!\!&\ge&\!\!\! \delta\mathbb{E}\int_{0}^{T}|v(t)|^2dt,\quad \forall v\in  L^2_{\mathbb{F}}(0,T;\mathbb{R}^m).
\end{eqnarray}
Here, $X^{0,v}(\cdot)$ is the solution to control system \eqref{eq controlsys} with control $v\in  L^2_{\mathbb{F}}(0,T;\mathbb{R}^m)$ and initial datum $0$.

  \item [ (A4)] For the given matrix-valued random variable $M\in L^{\infty}_{\mathcal{F}_{T}}(\Omega;\mathbb{R}^{\ell \times n})$ and initial datum $x\in \mathbb{R}^n$, the mapping $u\mapsto MX^{x,u}(T)$ is surjective, i.e.,
	\begin{align*}
		L^{2}_{\mathcal{F}_{T}}(\Omega;\mathbb{R}^{\ell})=\Big\{MX^{x,u}(T)\ \Big|\ u\in L^2_{\mathbb{F}}(0,T;\mathbb{R}^m)\Big\}.
	\end{align*}
\end{enumerate}

By condition (A4), we have the  set of
admissible controls
\begin{equation}\label{uad}
U_{ad}\triangleq\Big\{u\in L^2_{\mathbb{F}}(0,T;\mathbb{R}^m)\ \Big|\ MX^{x,u}(T)-b=0,\ a.s. \Big\}
\end{equation}
is nonempty. Then, by (A1), (A2) and (A4),
the constrained stochastic LQ problem (CSLQ) is well-defined, i.e., for any $u\in U_{ad}$, state equation \eqref{eq controlsys} admits a unique solution $X^{x,u}$ and $J(u)< +\infty$. In addition,  we shall see that the condition (A3), which is called uniform convexity condition in \cite{SunXiongYong2021}, implies the strong convexity of the cost functional $J(\cdot)$. Then, under conditions (A1)--(A4), the constrained stochastic LQ problem (CSLQ) admits unique optimal solution.

\begin{definition}\label{2222}
Let $\mathds{X}$ be a Banach space, $f:\mathds{X}\to \mathbb{R}$ is called a strongly convex functional with constant $\sigma>0$  if
$$f\big(\theta x+(1-\theta)y\big)\le \theta f(x)+(1-\theta)f(y)-\frac{\sigma}{2}\theta(1-\theta)|x-y|^2_{\mathds{X}},\quad \forall\ x,y\in \mathds{X}, \theta\in[0,1].$$
\end{definition}

\begin{lemma}\label{lemma existence}
Suppose that   (A1)--(A4) hold. Then the cost functional $J(\cdot)$ is a strongly convex continuous  functional on $L^2_{\mathbb{F}}(0,T;\mathbb{R}^m)$ and the constrained stochastic LQ problem (CSLQ) is uniquely solvable.
\end{lemma}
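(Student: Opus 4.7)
The plan is to establish three things in turn: continuity of $J$, strong convexity of $J$, and then existence/uniqueness of a minimizer over the admissible set $U_{ad}$. All three pieces rely on the linearity of the state equation \eqref{eq controlsys} in $(x,u)$, which gives the decomposition
\[
X^{x,u_1}(\cdot)-X^{x,u_2}(\cdot)=X^{0,u_1-u_2}(\cdot),\qquad X^{x,\theta u_1+(1-\theta)u_2}=\theta X^{x,u_1}+(1-\theta)X^{x,u_2}.
\]

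For continuity, I would invoke the standard well-posedness estimate for linear SDEs under (A1): there exists $C>0$ such that $\mathbb{E}\sup_{t\in[0,T]}|X^{0,v}(t)|^{2}\le C\,\mathbb{E}\int_{0}^{T}|v(t)|^{2}\,\mathrm{d}t$ for every $v\in L^{2}_{\mathbb{F}}(0,T;\mathbb{R}^{m})$. Combined with the above linearity identity and the boundedness of $Q,R,G$ from (A2), the difference $J(u_{1})-J(u_{2})$ splits into cross terms that are continuous in $u$ and a quadratic term controlled by $\|u_{1}-u_{2}\|^{2}_{L^{2}_{\mathbb{F}}(0,T;\mathbb{R}^{m})}$; this yields continuity (in fact local Lipschitz continuity) of $J$ on $L^{2}_{\mathbb{F}}(0,T;\mathbb{R}^{m})$.

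For strong convexity, the key identity is the parallelogram-type formula for a symmetric quadratic form $q(y)=\langle Py,y\rangle$:
\[
q\bigl(\theta y_{1}+(1-\theta)y_{2}\bigr)=\theta q(y_{1})+(1-\theta)q(y_{2})-\theta(1-\theta)\,q(y_{1}-y_{2}).
\]
Applying this pointwise (in $t,\omega$) to each of the three quadratic terms defining $J$, and using $X^{x,\theta u_{1}+(1-\theta)u_{2}}=\theta X^{x,u_{1}}+(1-\theta)X^{x,u_{2}}$ together with $X^{x,u_{1}}-X^{x,u_{2}}=X^{0,u_{1}-u_{2}}$, I get
\[
J\bigl(\theta u_{1}+(1-\theta)u_{2}\bigr)=\theta J(u_{1})+(1-\theta)J(u_{2})-\theta(1-\theta)\,J^{0}(u_{1}-u_{2}).
\]
The uniform convexity assumption (A3) then bounds the last term below by $\delta\theta(1-\theta)\,\mathbb{E}\int_{0}^{T}|u_{1}(t)-u_{2}(t)|^{2}\,\mathrm{d}t$, which is exactly the strong convexity condition in Definition \ref{2222} with constant $\sigma=2\delta$.

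For existence and uniqueness, I would argue as follows. By (A4) the admissible set $U_{ad}$ is nonempty, and it is affine (hence closed and convex) as the preimage of $\{b\}$ under the continuous affine map $u\mapsto MX^{x,u}(T)$ into $L^{2}_{\mathcal{F}_{T}}(\Omega;\mathbb{R}^{\ell})$. Strong convexity of $J$ together with continuity implies $J$ is coercive on $L^{2}_{\mathbb{F}}(0,T;\mathbb{R}^{m})$ (a standard consequence: $J(u)\ge J(u_{0})+\langle J'(u_{0}),u-u_{0}\rangle+\delta\|u-u_{0}\|^{2}$ for any fixed $u_{0}\in U_{ad}$), so any minimizing sequence in $U_{ad}$ is bounded. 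Extracting a weakly convergent subsequence in the Hilbert space $L^{2}_{\mathbb{F}}(0,T;\mathbb{R}^{m})$, weak closedness of the convex closed set $U_{ad}$ keeps the limit admissible, and convexity plus continuity of $J$ give weak lower semicontinuity; the limit is therefore a minimizer. Uniqueness follows immediately from strict convexity. The only place where something beyond routine bookkeeping is required is the verification that coercivity really does follow from strong convexity in this unbounded-domain setting, but this is a direct consequence of the quadratic lower bound above.
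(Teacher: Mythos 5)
Your proof is correct and takes essentially the same route as the paper: the identity reducing the convexity defect of $J$ to $-\theta(1-\theta)J^{0}(u_{1}-u_{2})$ combined with (A3), and then nonemptiness (via (A4)) plus closedness and convexity of $U_{ad}$, the only difference being that the paper cites a standard convex-optimization existence theorem where you carry out the direct method (coercivity, weak compactness, weak lower semicontinuity) explicitly. Incidentally, your constant $\sigma=2\delta$ is the sharp one delivered by the quadratic identity; the paper's displayed computation loses a factor of $2$ when it rewrites $-\tfrac{1}{2}\theta(1-\theta)\mathbb{E}[\cdots]$ as $-\tfrac{1}{2}\theta(1-\theta)J^{0}(u_{1}-u_{2})$ and so states the weaker constant $\delta$, which does not affect the conclusion.
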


\begin{proof}
The continuity of $J(\cdot)$ is obvious.
For any $u_{1}, u_{2}\in L^2_{\mathbb{F}}(0,T;\mathbb{R}^m)$, $\theta\in [0,1]$, by (A1)--(A3), we have
\begin{align}\label{eq stronglyconvex}
J\big(\theta u_{1}+(1-\theta)u_{2}\big)&= \frac{1}{2} \mathbb{E}\Bigg[\int_{0}^{T}\left\langle Q(t) X^{x,\theta u_{1}+(1-\theta)u_{2}}(t), X^{x,\theta u_{1}+(1-\theta)u_{2}}(t)\right\rangle\nonumber\\
&\qquad\qquad +\langle R(t) (\theta u_{1}(t)+(1-\theta)u_{2}(t)), \theta u_{1}(t)+(1-\theta)u_{2}(t)\rangle \mathrm{d} t\nonumber\\
&\qquad\qquad  +\left\langle G X^{x,\theta u_{1}+(1-\theta)u_{2}}(T), X^{x,\theta u_{1}+(1-\theta)u_{2}}(T)\right\rangle\Bigg] \nonumber\\
&= \theta J(u_{1})+(1-\theta)J(u_{2}) \nonumber\\ &\quad -\frac{1}{2}\theta(1-\theta)\mathbb{E}\Bigg[\int_{0}^{T}\left\langle Q(t) X^{0, u_{1}-u_{2}}(t), X^{0, u_{1}-u_{2}}(t)\right\rangle\nonumber\\
&\quad +\langle R(t) (u_{1}(t)-u_{2}(t)),  u_{1}(t)-u_{2}(t)\rangle \mathrm{d} t\nonumber\\
&\quad  +\left\langle G X^{0, u_{1}-u_{2}}(T), X^{0, u_{1}-u_{2}}(T)\right\rangle\Bigg] \nonumber\\
&=\theta J(u_{1})+(1-\theta)J(u_{2})-\frac{1}{2}\theta(1-\theta)J^0(u_{1}-u_{2})\nonumber\\
&\le \theta J(u_{1})+(1-\theta)J(u_{2})-
\frac{\delta}{2}\theta(1-\theta)\mathbb{E}\int_{0}^{T}|u_{1}(t)-u_{2}(t)|^2dt,
\end{align}
i.e., $J(\cdot)$ is a strongly convex  functional on $L^2_{\mathbb{F}}(0,T;\mathbb{R}^m)$.

By assumption (A4), $U_{ad}$  is nonempty. Since the control system \eqref{eq controlsys} is linear and the terminal state constraint is a linear equality constraint, $U_{ad}$ is a closed convex subset of $L^2_{\mathbb{F}}(0,T;\mathbb{R}^m)$. Then, by the standard existence theory of convex optimization (see, for instance, \cite[Theorem 2.31]{BonnansShapiro2000}), the problem (CSLQ) is uniquely solvable.
\end{proof}

\section{Lagrangian duality}\label{sec3}

In this section, we shall prove a Lagrangian duality theorem for the constrained stochastic LQ problem (CSLQ) and derive a closed-form solution with optimal parameter to (CSLQ) by dual approach.

Let us first recall some basic notions for the Lagrangian duality in optimization. For more details  we refer the readers to \cite{BonnansShapiro2000}. Let $\mathds{X}$, $\mathds{Y}$ be two Banach spaces, $C_{\mathds{X}}\subset \mathds{X}$, $C_{\mathds{Y}}\subset \mathds{Y}$ be arbitrary nonempty sets. Let us associate with a functional $\mathds{L}: C_{\mathds{X}}\times C_{\mathds{Y}} \to \mathbb{R}\cup\{\pm\infty\}$ the primal and dual problems, defined as follows
\begin{align*}
		\inf_{x\in C_{\mathds{X}}}\sup_{y\in C_{\mathds{Y}}  }\mathds{L}(x,y), \tag{P}
	\end{align*}
\begin{align*}
		\sup_{y\in C_{\mathds{Y}} }\inf_{x\in C_{\mathds{X}}}\mathds{L}(x,y).\tag{D}
\end{align*}

\begin{definition}[\cite{BonnansShapiro2000}]\label{11111}
It is said that the strong duality holds between the problem (P) and problem (D) if both problems have finite optimal values and
$$\sup_{y\in C_{\mathds{Y}} }\inf_{x\in C_{\mathds{X}}}\mathds{L}(x,y)= \inf_{x\in C_{\mathds{X}}}\sup_{y\in C_{\mathds{Y}}  }\mathds{L}(x,y).$$
$(\bar{x}, \bar{y})\in C_{\mathds{X}} \times C_{\mathds{Y}}$ is called a saddle point of the functional $\mathds{L}$ if $\mathds{L}(\bar{x}, \bar{y})\in \mathbb{R} $ and
	\begin{align*}
		\mathds{L}(\bar{x}, y)\leq \mathds{L}(\bar{x}, \bar y)\leq \mathds{L}(x, \bar y),\quad \forall\ (x,y)\in C_{\mathds{X}} \times C_{\mathds{Y}}.
	\end{align*}
\end{definition}

Now let us consider the Lagrangian duality theory for the constrained stochastic LQ problem (CSLQ). Define the Lagrangian functional for (CSLQ) by
\begin{equation*}
L(u,\lambda)\triangleq J(u)+\mathbb{E}\langle \lambda,MX^{x,u}(T)-b\rangle, \quad \forall \ u\in L^{2}_{\mathbb{F}}(0,T;\mathbb{R}^m), \ \lambda\in L^{2}_{\mathcal{F}_{T}}(\Omega;\mathbb{R}^{\ell}).
\end{equation*}
Here, $J(\cdot)$ is the cost functional defined by \eqref{costfun}. Clearly,
\begin{align*}
\sup_{\lambda \in L^{2}_{\mathcal{F}_{T}}(\Omega;\mathbb{R}^{\ell})}L(u,\lambda)&=\sup_{\lambda \in L^{2}_{\mathcal{F}_{T}}(\Omega;\mathbb{R}^{\ell})}\Big\{J(u)+\mathbb{E}\langle \lambda,MX^{x,u}(T)-b\rangle\Big\}\\
&=\begin{cases}
J(u),&   MX^{x,u}(T)-b=0,\ a.s. \\
+\infty,&  MX^{x,u}(T)-b\neq0,\ a.s.
\end{cases}
\end{align*}
Thus, the problem (CSLQ) is equivalent to
\begin{align}\label{pirmal_for_SLQ}
\inf_{u\in U_{ad}}J(u)=\inf_{\substack{ u\in L^{2}_{\mathbb{F}}(0,T;\mathbb{R}^m)\\ MX^{x,u}(T)-b=0}}J(u)=\inf_{u\in L^{2}_{\mathbb{F}}(0,T;\mathbb{R}^m)}\sup_{\lambda \in L^{2}_{\mathcal{F}_{T}}(\Omega;\mathbb{R}^{\ell})}L(u,\lambda).
\end{align}
Define the dual functional $d:L^{2}_{\mathcal{F}_{T}}(\Omega;\mathbb{R}^{\ell})\to \mathbb{R}\cup \{\pm\infty\}$ by
\begin{equation}\label{dual_func}
d(\lambda)\triangleq\inf_{u\in L^{2}_{\mathbb{F}}(0,T;\mathbb{R}^m)}L(u,\lambda), \quad \forall\ \lambda\in L^{2}_{\mathcal{F}_{T}}(\Omega;\mathbb{R}^{\ell}),
\end{equation}
and define the dual problem for (CSLQ) by
\begin{equation}\label{dual_for_SLQ}
\sup_{\lambda \in L^{2}_{\mathcal{F}_{T}}(\Omega;\mathbb{R}^{\ell})}\inf_{u\in L^{2}_{\mathbb{F}}(0,T;\mathbb{R}^m)}L(u,\lambda)=\sup_{\lambda \in L^{2}_{\mathcal{F}_{T}}(\Omega;\mathbb{R}^{\ell})}d(\lambda).
\end{equation}

Since the cost functional $J(\cdot)$ is strongly convex under conditions (A1)--(A3),  $L(\cdot,\lambda)$ is also a strongly convex functional for any $\lambda\in L^{2}_{\mathcal{F}_{T}}(\Omega;\mathbb{R}^{\ell})$. Then, the unconstrained stochastic LQ problem in the definition of $d(\lambda)$ admits unique solution and the dual functional $d(\cdot)$ is well-defined.  In what follows, we prove the strong duality between (CSLQ) and its dual problem \eqref{dual_for_SLQ}.

\begin{theorem}\label{th strong dual}
Suppose that (A1)--(A4) hold true and let $\bar{u}$ be the unique solution to (CSLQ).  Then the following two assertions hold true.
\begin{enumerate}[ (i) ]
\item   The strong duality between (CSLQ) and its dual problem (\ref{dual_for_SLQ}) holds true, i.e.
\begin{equation*}
\sup_{\lambda\in L^{2}_{\mathcal{F}_{T}}(\Omega;\mathbb{R}^{\ell})}\inf_{u\in L^2_{\mathbb{F}}(0,T;\mathbb{R}^m)}L(u,\lambda)= \inf_{u\in L^2_{\mathbb{F}}(0,T;\mathbb{R}^m)} \sup_{\lambda\in L^{2}_{\mathcal{F}_{T}}(\Omega;\mathbb{R}^{\ell})}L(u,\lambda).
\end{equation*}
\item The dual problem is solvable, and, if $\bar{\lambda}$ is the solution to the dual problem  then $(\bar{u},\bar{\lambda})$ is a saddle point of $L$, i.e.
\begin{equation*}
L(\bar{u},\lambda)\leq L(\bar{u},\bar{\lambda})\leq L(u,\bar{\lambda}),\quad \forall\ u\in L^2_{\mathbb{F}}(0,T;\mathbb{R}^m),   \forall\ \lambda\in L^{2}_{\mathcal{F}_{T}}(\Omega;\mathbb{R}^{\ell}).
\end{equation*}
Especially,
\begin{equation}\label{USLQ}
L(\bar u,\bar{\lambda})=\inf\limits_{u\in L^2_{\mathbb{F}}(0,T;\mathbb{R}^m)}L(u,\bar{\lambda}).
\end{equation}
\end{enumerate}
\end{theorem}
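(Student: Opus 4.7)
I would prove part (ii) first by exhibiting a saddle point $(\bar u,\bar\lambda)$, and then deduce part (i) immediately from it. Since $\bar u$ is feasible and optimal by Lemma \ref{lemma existence}, the reformulation \eqref{pirmal_for_SLQ} gives $\sup_{\lambda}L(\bar u,\lambda)=J(\bar u)=\inf_u\sup_{\lambda}L(u,\lambda)$, and the trivial weak-duality inequality yields $\sup_{\lambda}d(\lambda)\le J(\bar u)$. Consequently the entire argument reduces to producing $\bar\lambda\in L^{2}_{\mathcal F_T}(\Omega;\mathbb R^\ell)$ such that $\bar u$ minimizes $L(\cdot,\bar\lambda)$ over the whole space $L^{2}_{\mathbb F}(0,T;\mathbb R^m)$. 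Indeed, once such a $\bar\lambda$ is found, feasibility of $\bar u$ gives $d(\bar\lambda)=L(\bar u,\bar\lambda)=J(\bar u)$, which forces equality in weak duality; then (i) follows, $\bar\lambda$ solves the dual, and the two saddle inequalities hold—the right one by the minimizing property of $\bar u$, the left one because $MX^{x,\bar u}(T)-b=0$ makes $L(\bar u,\lambda)=J(\bar u)$ independent of $\lambda$. In particular \eqref{USLQ} is then immediate.

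\textbf{Constructing the multiplier.} I would view the problem through the continuous affine map
\begin{equation*}
\Phi:L^{2}_{\mathbb F}(0,T;\mathbb R^m)\to L^{2}_{\mathcal F_T}(\Omega;\mathbb R^\ell),\qquad \Phi(u)\triangleq MX^{x,u}(T)-b,
\end{equation*}
whose bounded linear part $u\mapsto MX^{0,u}(T)$ is surjective by (A4). The cost $J$ is convex and continuous (in fact strongly convex by Lemma \ref{lemma existence}). Since surjectivity of $\Phi'$ between Hilbert spaces is precisely Robinson's constraint qualification, a standard Lagrange multiplier theorem for convex equality-constrained optimization in Banach spaces (for instance \cite{BonnansShapiro2000}), or equivalently a direct Hahn--Banach separation of the convex image $\{(J(u)-J(\bar u)+r,\Phi(u)):u\in L^{2}_{\mathbb F},\,r\ge 0\}$ from the ray $(-\infty,0)\times\{0\}$, supplies $\bar\lambda\in L^{2}_{\mathcal F_T}(\Omega;\mathbb R^\ell)$ with $J'(\bar u)+\Phi'(\bar u)^{*}\bar\lambda=0$. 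By convexity, this first-order condition is exactly the statement that $\bar u$ is an unconstrained minimizer of $L(\cdot,\bar\lambda)$, which closes the loop set up in the overview.

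\textbf{Main obstacle.} The delicate step is the existence of $\bar\lambda$ as a genuine $\mathcal F_T$-measurable random vector in $L^{2}_{\mathcal F_T}(\Omega;\mathbb R^\ell)$; without (A4) the range of $\Phi'(\bar u)$ need not be closed, Robinson's qualification fails, and the dual element produced by separation cannot be identified via Riesz representation with a square-integrable random vector. The self-duality of the Hilbert space $L^{2}_{\mathcal F_T}(\Omega;\mathbb R^\ell)$ is what allows the abstract multiplier to live in the same space that indexes the dual problem. Once this functional-analytic step is secured, all remaining verifications are purely algebraic: substituting $\bar\lambda$ into $L$, invoking $\Phi(\bar u)=0$, and chaining the equalities $\sup_\lambda d(\lambda)\ge d(\bar\lambda)=J(\bar u)=\inf_u\sup_\lambda L(u,\lambda)$ with weak duality to conclude (i), (ii), and \eqref{USLQ}.
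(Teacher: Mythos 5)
Your proposal is correct and takes essentially the same route as the paper: the convex image set you propose to separate from the ray $(-\infty,0)\times\{0\}$ is precisely the paper's set $\mathcal{K}$ (with the slack $r\ge 0$ replacing the inequality $J(u)-J(\bar u)\le\alpha$), and condition (A4) plays exactly the same double role there — via the open mapping theorem it gives $\mathcal{K}$ nonempty interior so that Hahn--Banach separation applies, and it rules out the degenerate multiplier $\lambda_0=0$, which is what your invocation of Robinson's constraint qualification packages abstractly; the concluding chain $\sup_\lambda d(\lambda)\ge d(\bar\lambda)=J(\bar u)=\inf_u\sup_\lambda L(u,\lambda)$ combined with weak duality is the paper's display \eqref{eq 21}--\eqref{eq 23} verbatim. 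The only point left implicit is that the saddle-point property is asserted for \emph{every} dual solution, not just the constructed multiplier, but your strong-duality chain delivers this immediately (any maximizer $\tilde\lambda$ satisfies $\inf_u L(u,\tilde\lambda)=d(\tilde\lambda)=J(\bar u)=L(\bar u,\tilde\lambda)$ by feasibility of $\bar u$), just as in the paper's final step.
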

\begin{proof}
Define
$$
\mathcal{K}=\Big\{(\alpha,\beta)\in\mathbb{R}\times L^{2}_{\mathcal{F}_{T}}(\Omega;\mathbb{R}^{\ell}) \Big|\ \exists~u\in L^2_{\mathbb{F}}(0,T;\mathbb{R}^m) \text{ s.t. }J(u)-J(\overline{u})\leq \alpha, \ MX^{x,u}(T)-b=\beta,\ a.s. \Big\},
$$
and
$$
\mathcal{O}=\Big\{(\alpha^{'},\beta^{'})\in\mathbb{R}\times L^{2}_{\mathcal{F}_{T}}(\Omega;\mathbb{R}^{\ell})\Big|\alpha^{'}<0, \ \beta^{'}=0,\ a.s. \Big\}.
$$
Clearly, both $\mathcal{K}$ and $\mathcal{O}$ are convex sets. We claim that the interior of $\mathcal{K}$ is nonempty. By condition (A4), $u\mapsto MX^{x,u}(T)$ is a surjection. Then, the linear mapping $\Gamma:u\mapsto MX^{0,u}(T)$ is also a surjection. Meanwhile, there exist $\kappa_{1}, \kappa_{2}>0$ satisfying
\begin{align*}
\mathbb{E}|MX^{0,u}(T)|^2&\le\mathbb{E}|M|^2\cdot|X^{0,u}(T)|^2\le\kappa_{1} \mathbb{E}|X^{0,u}(T)|^2\le\kappa_{2} \mathbb{E}\int_{0}^{T}|u(t)|^2\mathrm{d}t.
\end{align*}
According to the classical open mapping theorem (see, for instance, \cite[Theorem 5A.1]{Dontchev2014}), we know that $\Gamma$ is an open mapping and there is~$\kappa_3>0$  such that for any~$\xi\in L_{\mathcal{F}_{T}}^2(\Omega;\mathbb{R}^\ell)$  there exists~$u\in L^2_{\mathbb{F}}(0,T;\mathbb{R}^m)$  satisfying
$MX^{0,u}(T)=\xi$, and $ \|u\|_{L^2_{\mathbb{F}}(0,T;\mathbb{R}^m)}\le\kappa_3\|\xi\|_{L_{\mathcal{F}_{T}}^2(\Omega;\mathbb{R}^\ell)}.
$
Especially, for any fixed $\varepsilon$ and any $\beta\in \varepsilon\mathcal{B}_{L_{\mathcal{F}_{T}}^2(\Omega;\mathbb{R}^\ell)}(0,1)$, there exists~$v\in L^2_{\mathbb{F}}(0,T;\mathbb{R}^m)$ such that
\begin{align*}
MX^{0,v}(T)=\beta \text{ and } \|v\|_{L^2_{\mathbb{F}}(0,T;\mathbb{R}^m)}\le\kappa_3\varepsilon.
\end{align*}
Then,
\begin{equation}\label{eq 3.8}
MX^{x,\bar{u}+v}(T)-b=M\bar{X}^{x,\bar{u}}(T)-b+MX^{0,v}(T)=\beta\in \varepsilon\mathcal{B}_{L_{\mathcal{F}_{T}}^2(\Omega;\mathbb{R}^\ell)}(0,1).
\end{equation}
Let $\alpha>0$. By the continuity of $J(\cdot)$, there is $\varepsilon$ such that
\begin{equation}\label{eq 3.9}
J(\bar{u}+v)\le J(\bar{u})+\alpha,\quad\forall\ v\in \varepsilon\mathcal{B}_{L_{\mathcal{F}_{T}}^2(\Omega;\mathbb{R}^\ell)}(0,1).
\end{equation}
Combining \eqref{eq 3.8} with \eqref{eq 3.9}, we obtain that
$$
(\alpha,+\infty)\times \varepsilon\mathcal{B}_{L_{\mathcal{F}_{T}}^2(\Omega;\mathbb{R}^\ell)}(0,1)\subset \mathcal{K}.
$$
This proves that the interior of  $\mathcal{K}$ is nonempty.

By the optimality of~$\bar{u}$, we obtain~$\mathcal{K}\cap \mathcal{O}=\emptyset$. Then, by separation theorem, there is~$(\lambda_{0},\lambda)\in \mathbb{R}\times L_{\mathcal{F}_{T}}^2(\Omega;\mathbb{R}^\ell), (\lambda_{0},\lambda)\not=0$ such that
\begin{align*}
\inf_{(\alpha,\beta)\in \mathcal{K}}\Big\{\lambda_{0}\alpha+\mathbb{E}\langle\lambda,\beta\rangle\Big\}\geq\sup_{(\alpha^{'},\beta^{'})\in \mathcal{O}}\Big\{\lambda_{0}\alpha^{'}+\mathbb{E}\langle\lambda,\beta^{'}\rangle\Big\}=\sup_{\alpha^{'}<0}\lambda_{0}\alpha^{'}.
\end{align*}
Clearly, $\lambda_{0}\geq 0, \sup\limits_{\alpha^{'}<0}\lambda_{0}\alpha^{'}=0$, and
\begin{align*}
0 \leq \inf_{(\alpha,\beta)\in \mathcal{K}}\Big\{\lambda_{0}\alpha+\mathbb{E}\langle\lambda,\beta\rangle\Big\}\leq \inf_{u\in L^2_{\mathbb{F}}(0,T;\mathbb{R}^m)}\Big\{\lambda_{0}\big(J(u)-J(\bar{u})\big)+\mathbb{E}\langle\lambda, MX^{x,u}(T)-b\rangle\Big\}.
\end{align*}

Assume $\lambda_{0}=0$, then
\begin{align*}
0\leq \mathbb{E}\langle\lambda, MX^{x,u}(T)-b\rangle,\quad \forall \ u\in L^2_{\mathbb{F}}(0,T;\mathbb{R}^m).
\end{align*}
By condition  (A4), we must have $\lambda=0$  which  contradicts to $(\lambda_{0},\lambda)\not=0$. Therefore, $\lambda_{0}>0$.
Let $\bar{\lambda}=\frac{\lambda}{\lambda_{0}}$, we obtain that
\begin{align*}
0\leq J(u)-J(\bar{u})+\mathbb{E}\langle \bar{\lambda}, MX^{x,u}(T) -b\rangle,\quad \forall\ u\in L^2_{\mathbb{F}}(0,T;\mathbb{R}^m).
\end{align*}
Since $M\bar{X}^{x,\bar{u}}(T)-b=0$,
then
\begin{equation}\label{eq 20}
J(\bar{u})+ \mathbb{E}\langle \bar{\lambda}, M\bar{X}^{x,\bar{u}}(T)-b\rangle \leq J(u)+\mathbb{E}\langle \bar{\lambda}, MX^{x,u}(T)-b\rangle, \quad \forall\ u\in L^2_{\mathbb{F}}(0,T;\mathbb{R}^m).
\end{equation}
By \eqref{pirmal_for_SLQ}--\eqref{dual_for_SLQ} and \eqref{eq 20}, we have
\begin{align}\label{eq 21}
\inf_{u\in L^2_{\mathbb{F}}(0,T;\mathbb{R}^m)}\sup_{\lambda\in L_{\mathcal{F}_{T}}^2(\Omega;\mathbb{R}^\ell)}L(u,\lambda)
=&J(\bar{u})\nonumber\\
=&J(\bar{u})+ \mathbb{E}\langle \bar{\lambda}, M\bar{X}^{x,\bar{u}}(T)-b\rangle\nonumber\\
\leq&\inf_{u\in L^2_{\mathbb{F}}(0,T;\mathbb{R}^m)}L(u,\bar{\lambda})\nonumber\\
=&d(\bar{\lambda})\nonumber\\
\leq&  \sup_{\lambda\in L_{\mathcal{F}_{T}}^2(\Omega;\mathbb{R}^\ell)}d(\lambda)\nonumber\\
=&\sup_{\lambda\in L_{\mathcal{F}_{T}}^2(\Omega;\mathbb{R}^\ell)}\inf_{u\in L^2_{\mathbb{F}}(0,T;\mathbb{R}^m)}L(u,\lambda).
\end{align}
On the other hand, it is obvious that
\begin{equation*}
\sup_{\lambda\in L_{\mathcal{F}_{T}}^2(\Omega;\mathbb{R}^\ell)}\inf_{u\in L^2_{\mathbb{F}}(0,T;\mathbb{R}^m)}L(u,\lambda)
 \leq \inf_{u\in L^2_{\mathbb{F}}(0,T;\mathbb{R}^m)}\sup_{\lambda\in L_{\mathcal{F}_{T}}^2(\Omega;\mathbb{R}^\ell)}L(u,\lambda).
\end{equation*}
Therefore,
\begin{equation}\label{eq 23}
\sup_{\lambda\in L_{\mathcal{F}_{T}}^2(\Omega;\mathbb{R}^\ell)}\inf_{u\in L^2_{\mathbb{F}}(0,T;\mathbb{R}^m)}L(u,\lambda)= \inf_{u\in L^2_{\mathbb{F}}(0,T;\mathbb{R}^m)} \sup_{\lambda\in L_{\mathcal{F}_{T}}^2(\Omega;\mathbb{R}^\ell)}L(u,\lambda).
\end{equation}
This proves (i).

In addition, by \eqref{eq 21} and   \eqref{eq 23},
\begin{equation}\label{24}
d(\bar{\lambda})=\sup_{\lambda\in L_{\mathcal{F}_{T}}^2(\Omega;\mathbb{R}^\ell)}d(\lambda),
\end{equation}
i.e., the dual problem  is solvable and $\bar \lambda$  is an optimal solution to dual problem. Furthermore, for any solution $\bar{\lambda}$ of the dual problem,
\begin{equation}\label{eq 3.11+}
 \sup_{\lambda\in L_{\mathcal{F}_{T}}^2(\Omega;\mathbb{R}^\ell)}\inf_{u\in L^2_{\mathbb{F}}(0,T;\mathbb{R}^m)} L(u,\lambda)=\inf_{u\in L^2_{\mathbb{F}}(0,T;\mathbb{R}^m)}L(u,\bar\lambda)\le L(u,\bar\lambda),\quad \forall\ u\in L^2_{\mathbb{F}}(0,T;\mathbb{R}^m).
\end{equation}
By \eqref{pirmal_for_SLQ} and the optimality of $\bar u$,
\begin{equation}\label{eq 3.12+}
L(\bar{u},\lambda)\leq L(\bar{u},\bar\lambda)=J(\bar u)= \inf_{u\in L^2_{\mathbb{F}}(0,T;\mathbb{R}^m)} \sup_{\lambda\in L_{\mathcal{F}_{T}}^2(\Omega;\mathbb{R}^\ell)}L(u,\lambda),\quad\ \forall\ \lambda\in L_{\mathcal{F}_{T}}^2(\Omega;\mathbb{R}^\ell).
\end{equation}
Combining \eqref{eq 23} with \eqref{eq 3.11+}--\eqref{eq 3.12+}, we obtain that $(\bar u,\bar \lambda)$ is a saddle point of $L$.

This completes the proof of Theorem \ref{th strong dual}.
\end{proof}

By  Theorem \ref{th strong dual}, to solve the constrained stochastic LQ problem (CSLQ), we can first find the optimal solution $\bar \lambda$ to its dual problem \eqref{dual_for_SLQ}. Then, by \eqref{USLQ}, (CSLQ) can be transformed into an unconstrained stochastic LQ problem with optimal parameter $\bar \lambda$, and, the optimal solution to problem (CSLQ) can be found by the standard method of unconstrained stochastic LQ problem.

Consider the  Riccati equation
\begin{align}\label{eq Riccati}
\left\{
\begin{array}{l}
d P(t)=-\Big[P(t) A(t)+A(t)^{\top} P(t)+C(t)^{\top} P(t) C(t)+Q(t)+\Lambda(t) C(t)+C(t)^{\top} \Lambda(t)\\
\ns
\ds
\qquad\qquad-L(t)^{\top} K(t)^{-1} L(t)\Big] d t+\Lambda(t) d W(t), \quad t \in[0, T], \\
\ns
\ds
P(T)=G,
\end{array}
\right.
\end{align}
and the backward stochastic  differential equation
\begin{equation}\label{eq 261} 	
\left\{
\begin{array}{l}	
\mathrm{d}\varphi_{\lambda}(t)=-\Big[
\Big(A(t)^{\top} -L(t)^{\top}K(t)^{-1}B(t)^{\top}\Big)\varphi_{\lambda}(t) \\[2mm]
\qquad\qquad\quad+\Big(C(t)^{\top} -L(t)^{\top}K(t)^{-1}D(t)^{\top}\Big)\psi_{\lambda}(t)\Big]\mathrm{d}t+\psi_{\lambda}(t)\mathrm{d}W(t)
, \quad  t\in [0,T],\\[2mm]
\ns
\ds
\varphi_{\lambda}(T)=M^{\top}\lambda.
\end{array}
\right.
\end{equation}
Here,
\begin{align}\label{eq xishu}
L(t)\triangleq B(t)^{\top} P(t)+D(t)^{\top} P(t) C(t)+D(t)^{\top} \Lambda(t), \quad K(t)\triangleq R(t)+D(t)^{\top} P(t) D(t) .
\end{align}

By (A1)--(A3) and \cite[Theorem 6.1]{SunXiongYong2021}, the Riccati equation \eqref{eq Riccati} admits a unique solution  $\big(P(\cdot), \Lambda(\cdot)\big) \in L_{\mathbb{F}}^{\infty}\left(\Omega ; C\left([0, T], \mathbb{S}^{n}\right)\right) \times L_{\mathbb{F}}^{2}\left(0, T ; \mathbb{S}^{n}\right)$  such that  $K(t) \geq \delta I_{m} , a.e. \ t \in[0, T] , a.s.$ for some  $\delta>0 $. Similar to \cite{zhang}, when the solution $\big(P(\cdot), \Lambda(\cdot)\big) $ satisfies the  regularity condition
\begin{align}\label{eq zhnegzexing}
K(t)^{-1} L(t) \in L_{\mathbb{F}}^{\infty}\left(\Omega; L^{2}\left(0, T ; \mathbb{R}^{m \times n}\right)\right),
\end{align}
the dual functional $d(\cdot)$ has a much simpler expression.

\begin{proposition}\label{prop exp_lambda}
Suppose that  (A1)--(A4) hold. Let $\big(P(\cdot),\Lambda(\cdot)\big)$ be the solution to Riccati equation \eqref{eq Riccati} satisfying the regularity condition \eqref{eq zhnegzexing}. Then
\begin{equation}\label{lambda}
d(\lambda)
=\frac{1}{2}\langle P(0)x,x\rangle+\langle \varphi_{\lambda}(0),x\rangle-\mathbb{E}\langle b,\lambda\rangle-\frac{1}{2}\mathbb{E}\int_{0}^{T}\Big|K(t)^{-\frac{1}{2}}\big[B(t)^{\top}\varphi_{\lambda}(t)+D(t)^{\top}\psi_{\lambda}(t)\big]\Big|^2\mathrm{d}t,
\end{equation}
where $(\varphi_{\lambda}(\cdot), \psi_{\lambda}(\cdot))\in L_{\mathbb{F}}^{2}\left(\Omega; C\left([0, T]; \mathbb{R}^{n}\right)\right)\times L_{\mathbb{F}}^{2}\left(0, T; \mathbb{R}^{n}\right)$ is an adapted solution to equation \eqref{eq 261} and $L(\cdot)$ and $K(\cdot)$ are defined by \eqref{eq xishu}. In addition,
\begin{align}\label{bar_lambda}
\bar{u}_{\lambda}(t)\triangleq -K(t)^{-1}\Big[L(t)\bar{X}^{x,\bar{u}_{\lambda}}(t)+B(t)^{\top}\varphi_{\lambda}(t)+D(t)^{\top}\psi_{\lambda}(t)\Big]
\end{align}
is the  feedback optimal solution of the parameterized stochastic LQ problem, i.e.,
\begin{equation}\label{UCESLQ_lambda}
L(\bar{u}_{\lambda},\lambda)=\inf_{u\in L^{2}_{\mathbb{F}}(0,T;\mathbb{R}^m)}L(u,\lambda).
\end{equation}
\end{proposition}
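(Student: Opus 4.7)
The plan is to rewrite the Lagrangian $L(u,\lambda)$ as a $u$-independent constant (to be identified with $d(\lambda)$) plus a non-negative quadratic-in-$u$ term that vanishes precisely at the candidate feedback. Setting $\xi(t)=X^{x,u}(t)$, I would first apply It\^o's formula to $\langle P(t)\xi(t),\xi(t)\rangle$ using the Riccati equation \eqref{eq Riccati} and the state dynamics \eqref{eq controlsys}. After integrating over $[0,T]$, taking expectation, and using $P(T)=G$, the quadratic-in-$\xi$ contributions coming from $PA+A^{\top}P$, $C^{\top}PC$, and $\Lambda C+C^{\top}\Lambda$ cancel against the corresponding parts of the Riccati drift, and invoking $K=R+D^{\top}PD$ and $L=B^{\top}P+D^{\top}PC+D^{\top}\Lambda$ leaves the standard completion-of-squares identity
\[
J(u)=\tfrac12\langle P(0)x,x\rangle+\tfrac12\mathbb{E}\int_0^T\bigl\langle K(t)\bigl(u+K^{-1}L\xi\bigr),\,u+K^{-1}L\xi\bigr\rangle\,dt.
\]

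Next, using $\mathbb{E}\langle\lambda,MX^{x,u}(T)\rangle=\mathbb{E}\langle\varphi_\lambda(T),\xi(T)\rangle$, I would apply It\^o's formula to $\langle\varphi_\lambda(t),\xi(t)\rangle$ via \eqref{eq 261}: the pairings $\langle A^{\top}\varphi_\lambda,\xi\rangle$, $\langle C^{\top}\psi_\lambda,\xi\rangle$ cancel against $\langle\varphi_\lambda,A\xi\rangle$, $\langle\psi_\lambda,C\xi\rangle$, and the residual terms re-collect, using symmetry of $K$ to turn $\langle L^{\top}K^{-1}(B^{\top}\varphi_\lambda+D^{\top}\psi_\lambda),\xi\rangle$ into $\langle B^{\top}\varphi_\lambda+D^{\top}\psi_\lambda,K^{-1}L\xi\rangle$, giving
\[
\mathbb{E}\langle M^{\top}\lambda,\xi(T)\rangle-\langle\varphi_\lambda(0),x\rangle=\mathbb{E}\int_0^T\bigl\langle B^{\top}\varphi_\lambda+D^{\top}\psi_\lambda,\,u+K^{-1}L\xi\bigr\rangle\,dt.
\]
Writing $\zeta:=u+K^{-1}L\xi$ and $\eta:=B^{\top}\varphi_\lambda+D^{\top}\psi_\lambda$, adding the two identities and subtracting $\mathbb{E}\langle b,\lambda\rangle$ yields
\[
L(u,\lambda)=\tfrac12\langle P(0)x,x\rangle+\langle\varphi_\lambda(0),x\rangle-\mathbb{E}\langle b,\lambda\rangle+\mathbb{E}\int_0^T\bigl[\tfrac12\langle K\zeta,\zeta\rangle+\langle\eta,\zeta\rangle\bigr]dt.
\]
A final completion of squares in $\zeta$ (valid since $K\ge\delta I_m$) gives $L(u,\lambda)=d(\lambda)+\tfrac12\mathbb{E}\int_0^T\bigl|K^{1/2}(\zeta+K^{-1}\eta)\bigr|^2\,dt$ with $d(\lambda)$ exactly as in \eqref{lambda}. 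The remainder is non-negative and vanishes iff $u=-K^{-1}(L\xi+\eta)$, which simultaneously yields \eqref{UCESLQ_lambda} and identifies $\bar u_\lambda$ with the closed-loop expression \eqref{bar_lambda}.

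The argument is preceded by two pieces of well-posedness. First, the regularity hypothesis \eqref{eq zhnegzexing} together with (A1) yields $BK^{-1}L,\,DK^{-1}L\in L^\infty_{\mathbb{F}}(\Omega;L^2(0,T;\mathbb{R}^{n\times n}))$, so \eqref{eq 261} is a linear BSDE whose stochastic Lipschitz constant is square-integrable in time, and standard BSDE theory produces a unique adapted solution $(\varphi_\lambda,\psi_\lambda)$ in the claimed space. Second, the same regularity makes the closed-loop SDE for $\bar X^{x,\bar u_\lambda}$ solvable and places $\bar u_\lambda$ in $L^2_{\mathbb{F}}(0,T;\mathbb{R}^m)$, legitimising the use of expectations in \eqref{UCESLQ_lambda}. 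The main obstacle, as in every completion-of-squares argument of this type, is the algebraic book-keeping: one must see that the $L^{\top}K^{-1}L$ term planted in the Riccati drift and the $-L^{\top}K^{-1}B^{\top}$, $-L^{\top}K^{-1}D^{\top}$ terms in the BSDE are precisely what is needed so that the cross products $2\langle L\xi,u\rangle$ and $\langle\eta,u\rangle$ assemble into the two perfect squares above. Once the substitutions $\zeta,\eta$ are made, the remaining manipulations are routine.
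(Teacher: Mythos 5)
Your proposal is correct and follows essentially the same route as the paper: It\^o's formula applied to $\langle P(t)X^{x,u}(t),X^{x,u}(t)\rangle$ via the Riccati equation \eqref{eq Riccati} and to $\langle\varphi_\lambda(t),X^{x,u}(t)\rangle$ via the BSDE \eqref{eq 261}, followed by completion of squares in $K$, which is exactly the chain of identities \eqref{eq 47}--\eqref{eq 31} in the paper's proof. Your $\zeta,\eta$ bookkeeping is just a slightly different packaging of the same algebra, and your added remarks on well-posedness of the BSDE and the closed-loop equation under \eqref{eq zhnegzexing} are consistent with what the paper assumes implicitly.
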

\begin{proof}
By the solvability of Riccati equation \eqref{eq Riccati} and It\^{o}'s formula, we
obtain that
\begin{align}\label{eq 47}
&\mathbb{E}\langle P(T)X^{x,u}(T),X^{x,u}(T)\rangle-\langle P(0)x,x\rangle\nonumber\\
&=\mathbb{E}\int_{0}^{T}\Big[\langle (L(t)^{\top}K(t)^{-1}L(t)-Q(t)    )X^{x,u}(t),X^{x,u}(t)\rangle\nonumber\\
&\qquad+2\langle L(t)X^{x,u}(t),u(t)\rangle+\langle D(t)^{\top}P(t)D(t)u(t),u(t)\rangle\Big]dt.
\end{align}
Also, applying It\^{o}'s formula to $\langle \varphi_{\lambda}(\cdot),X^{x,u}(\cdot) \rangle $, we get
\begin{align}\label{eq 30}
&\mathbb{E}\langle \varphi_{\lambda}(T),X^{x,u}(T)\rangle-\langle \varphi_{\lambda}(0),x\rangle\nonumber\\
&=\mathbb{E}\int_{0}^{T}\Big[\langle L(t)^{\top}K(t)^{-1}B(t)^{\top}\varphi_{\lambda}(t)
+L(t)^{\top}K(t)^{-1}D(t)^{\top}\psi_{\lambda}(t), X^{x,u}(t)\rangle\nonumber\\
&\qquad+\langle B(t)^{\top}\varphi_{\lambda}(t)+ D(t)^{\top}\psi_{\lambda}(t),u(t)\rangle\Big]\mathrm{d}t.
\end{align}
Combining \eqref{eq 47} with \eqref{eq 30}, we have
\begin{align}\label{eq 31}
L(u,\lambda)
&=J(u)+\mathbb{E}\langle MX^{x,u}(T)-b,\lambda\rangle\nonumber\\[-0.2em]
&=\frac{1}{2}\mathbb{E}\int_{0}^{T} \langle Q(t)X^{x,u}(t),X^{x,u}(t)\rangle+\langle R(t)u(t),u(t)\rangle \mathrm{d}t\nonumber\\[+0.5em]
&\qquad+\frac{1}{2}\mathbb{E}\langle GX^{x,u}(T),X^{x,u}(T)\rangle+\mathbb{E}\langle MX^{x,u}(T)-b,\lambda\rangle\nonumber\\[-0.2em]
&=\frac{1}{2}\langle P(0)x,x\rangle+\langle \varphi_{\lambda}(0),x\rangle- \mathbb{E}\langle b,\lambda\rangle\nonumber\\[-0.2em]
&\qquad+\frac{1}{2}\mathbb{E}\int_{0}^{T}\Big[\langle L(t)^{\top}K(t)^{-1}L(t)X^{x,u}(t),X^{x,u}(t)\rangle+\langle K(t)u(t),u(t)\rangle\nonumber\\[-0.2em]
&\qquad+2\langle L(t)X^{x,u}(t)+B(t)^{\top}\varphi_{\lambda}(t)+D(t)^{\top}\psi_{\lambda}(t),u(t)\rangle\nonumber\\[-0.2em]
&\qquad+2\langle L(t)^{\top}K(t)^{-1}B(t)^{\top}\varphi_{\lambda}(t)+L(t)^{\top}K(t)^{-1}D(t)^{\top}\psi_{\lambda}(t),X^{x,u}(t)\rangle\Big]dt\nonumber\\
&=\frac{1}{2}\langle P(0)x,x\rangle+\langle \varphi_{\lambda}(0),x\rangle-\mathbb{E}\langle b,\lambda\rangle\nonumber\\[-0.2em]
&\qquad+\frac{1}{2}\mathbb{E}\int_{0}^{T} \Big|K(t)^{\frac{1}{2}}\big[u(t)+K(t)^{-1}(L(t)X^{x,u}(t)+B(t)^{\top}\varphi_{\lambda}(t)
+D(t)^{\top}\psi_{\lambda}(t))\big]\Big|^2\mathrm{d}t\nonumber\\[-0.2em]
&\qquad
-\frac{1}{2}\mathbb{E}\int_{0}^{T}
\Big|K(t)^{-\frac{1}{2}}\big[B(t)^{\top}\varphi_{\lambda}(t)+D(t)^{\top}\psi_{\lambda}(t)\big]\Big|^2dt.
\end{align}
Therefore,
\begin{equation*}
\bar{u}_{\lambda}(t)=-K(t)^{-1}\Big[L(t)\bar{X}^{x, 	\bar{u}_{\lambda}}(t)+B(t)^{\top}\varphi_{\lambda}(t)+D(t)^{\top}\psi_{\lambda}(t))\Big]
\end{equation*}
is the unique optimal solution to the unconstrained stochastic LQ problem \eqref{UCESLQ_lambda} and
\begin{align*}
d(\lambda)=&\inf_{u\in L^{2}_{\mathbb{F}}(0,T;\mathbb{R}^m)}L(u,\lambda)\\
=&\frac{1}{2}\langle P(0)x,x\rangle+\langle \varphi_{\lambda}(0),x\rangle-\mathbb{E}\langle b,\lambda\rangle-\frac{1}{2}\mathbb{E}\int_{0}^{T}\Big|K(t)^{-\frac{1}{2}}\big[B(t)^{\top}\varphi_{\lambda}(t)+D(t)^{\top}\psi_{\lambda}(t)\big]\Big|^2\mathrm{d}t.
\end{align*}
This completes the proof of Proposition \ref{prop exp_lambda}.
\end{proof}

\begin{theorem}\label{th bar lambda}
Suppose that  (A1)--(A4) hold. Let $\big(P(\cdot),\Lambda(\cdot)\big)$ be the solution to Riccati equation \eqref{eq Riccati} satisfying the regularity condition \eqref{eq zhnegzexing}. Then the optimal control of (CSLQ) is
\begin{equation*}
\bar{u}(t)=-K(t)^{-1}\Big[L(t)\bar{\mathbb{X}}^{x,\bar \lambda}(t)+B(t)^{\top}\varphi_{\bar\lambda}(t)+D(t)^{\top}\psi_{\bar\lambda}(t)\Big], \ a.s.,\vspace{-0.5em}
\end{equation*}
where the optimal parameter $\bar \lambda$ is the solution to the first-order necessary condition for the  dual problem \eqref{dual_for_SLQ} that
\begin{equation}\label{1st conditon for dual}
M\bar{\mathbb{X}}^{x,\bar \lambda}(T)-b=0,\quad a.s.
\end{equation}
and $\bar{\mathbb{X}}^{x,\bar \lambda}$ is the solution to the equation
\begin{equation}\label{eq Pi}
\left\{\!\!\!
\begin{array}{ll}
d\bar{\mathbb{X}}^{x,\bar \lambda}(t)\!=\!\Big[(A(t)-B(t)K(t)^{-1}L(t))\bar{\mathbb{X}}^{x,\bar \lambda}(t)
   -B(t)K(t)^{-1}(B(t)^{\top}\varphi_{\bar{\lambda}}(t)+D(t)^{\top}\psi_{\bar{\lambda}}(t))\Big] \mathrm{d}t\\[+0.5em]
\qquad\qquad  +\Big[(C(t)-D(t)K(t)^{-1}L(t))\bar{\mathbb{X}}^{x,\bar \lambda}(t) -D(t)K(t)^{-1}(B(t)^{\top}\varphi_{\bar{\lambda}}(t)\!+\!D(t)^{\top}\psi_{\bar{\lambda}}(t))\Big]\! \mathrm{d}W(t),\\ [+0.5em]
\qquad\qquad\qquad\qquad\qquad\qquad\qquad\qquad\qquad\qquad\qquad\qquad\qquad\qquad\qquad\qquad\qquad \qquad t\!\in\! [0,T],\\
\bar{\mathbb{X}}^{x,\bar \lambda}(0)=x.
\end{array}
\right.
\end{equation}
\end{theorem}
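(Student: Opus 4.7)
The plan is to chain together Theorem~\ref{th strong dual} and Proposition~\ref{prop exp_lambda}; essentially nothing new needs to be produced because everything has already been assembled in these two results. By Theorem~\ref{th strong dual}(ii) the dual problem \eqref{dual_for_SLQ} admits a maximizer $\bar{\lambda}$ and $(\bar{u},\bar{\lambda})$ is a saddle point of $L$. The right-hand saddle-point inequality $L(\bar{u},\bar{\lambda})\le L(u,\bar{\lambda})$ for every $u\in L^{2}_{\mathbb{F}}(0,T;\mathbb{R}^{m})$ says precisely that $\bar{u}$ is the minimizer of the unconstrained parameterized problem $\inf_{u}L(u,\bar{\lambda})$.

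Applying Proposition~\ref{prop exp_lambda} with $\lambda=\bar\lambda$ then identifies this minimizer with $\bar{u}_{\bar\lambda}$, yielding
$$\bar{u}(t)=-K(t)^{-1}\bigl[L(t)\bar{X}^{x,\bar u}(t)+B(t)^{\top}\varphi_{\bar\lambda}(t)+D(t)^{\top}\psi_{\bar\lambda}(t)\bigr].$$
Substituting this expression back into the state equation \eqref{eq controlsys} and collecting terms produces precisely the closed-loop SDE \eqref{eq Pi}. By the uniqueness of solutions to that linear SDE one has $\bar{X}^{x,\bar u}=\bar{\mathbb{X}}^{x,\bar\lambda}$, which gives the claimed feedback formula for $\bar u$ in terms of $\bar{\mathbb{X}}^{x,\bar\lambda}$, $\varphi_{\bar\lambda}$, and $\psi_{\bar\lambda}$.

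To verify that $\bar\lambda$ satisfies \eqref{1st conditon for dual}, I invoke the other saddle-point inequality $L(\bar u,\lambda)\le L(\bar u,\bar\lambda)$ for every $\lambda\in L^{2}_{\mathcal{F}_{T}}(\Omega;\mathbb{R}^{\ell})$. Since
$$L(\bar u,\lambda)=J(\bar u)+\mathbb{E}\langle\lambda,\,M\bar X^{x,\bar u}(T)-b\rangle$$
is affine (linear plus a constant) in $\lambda$, its boundedness above on the whole space forces the linear part to vanish, i.e.\ $M\bar X^{x,\bar u}(T)-b=0$ a.s. Combined with $\bar X^{x,\bar u}=\bar{\mathbb{X}}^{x,\bar\lambda}$, this is exactly \eqref{1st conditon for dual}.

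The main obstacle is bookkeeping rather than new mathematics: one must verify carefully that substituting the feedback form into \eqref{eq controlsys} does reproduce \eqref{eq Pi}, which is a routine but slightly tedious algebraic computation because both the drift and the diffusion simultaneously pick up a state-linear term involving $-K(\cdot)^{-1}L(\cdot)$ and a purely $\bar\lambda$-dependent forcing involving $B^{\top}\varphi_{\bar\lambda}+D^{\top}\psi_{\bar\lambda}$. Conceptually, one could alternatively read \eqref{1st conditon for dual} off the stationarity condition $\nabla d(\bar\lambda)=0$ by differentiating the BSDE \eqref{eq 261} with respect to $\lambda$, but the saddle-point route above avoids that calculation entirely and is what I would use.
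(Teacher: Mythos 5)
Your proof is correct, but it reaches \eqref{1st conditon for dual} by a genuinely different route from the paper. The paper computes the G\^ateaux derivative of the dual functional $d$ at $\bar\lambda$ directly from the explicit formula \eqref{lambda}, then applies It\^o's formula to $\langle\bar{\mathbb{X}}^{x,\bar\lambda}(\cdot),\varphi_{\mu}(\cdot)\rangle$ to convert the stationarity condition into $\mathbb{E}\langle M\bar{\mathbb{X}}^{x,\bar\lambda}(T)-b,\mu\rangle=0$ for all $\mu$. You instead extract the constraint from the saddle-point inequality $L(\bar u,\lambda)\le L(\bar u,\bar\lambda)$ (affineness in $\lambda$ forces $M\bar X^{x,\bar u}(T)-b=0$ a.s.\ --- in fact this already follows from the feasibility of the optimal control $\bar u$), and transport it to $\bar{\mathbb{X}}^{x,\bar\lambda}$ via the identification $\bar X^{x,\bar u}=\bar{\mathbb{X}}^{x,\bar\lambda}$, which you correctly obtain from \eqref{USLQ}, the uniqueness of the minimizer in Proposition \ref{prop exp_lambda}, and uniqueness for the closed-loop SDE \eqref{eq Pi} (well-posed thanks to the regularity condition \eqref{eq zhnegzexing}); this last identification step is common to both proofs. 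What your shortcut buys is brevity: you never differentiate the BSDE \eqref{eq 261} in $\lambda$. What it loses is the sufficiency half: you only show that the dual maximizer satisfies \eqref{1st conditon for dual}, whereas the paper's computation shows that the directional derivative of $d$ at $\bar\lambda$ in direction $\mu$ equals $\mathbb{E}\langle M\bar{\mathbb{X}}^{x,\bar\lambda}(T)-b,\mu\rangle$, i.e., that \eqref{1st conditon for dual} \emph{is} the stationarity condition of the dual problem --- which, since $d$ is concave (an infimum of functionals affine in $\lambda$), guarantees that \emph{any} solution of \eqref{1st conditon for dual} is a dual maximizer. That direction is what makes the theorem usable as a recipe (solve \eqref{1st conditon for dual} for the parameter, then plug into the feedback); in your framework it can be recovered in one extra line, e.g., by noting that a $\lambda^{*}$ satisfying \eqref{1st conditon for dual} makes $\bar u_{\lambda^{*}}$ feasible with $J(\bar u_{\lambda^{*}})=L(\bar u_{\lambda^{*}},\lambda^{*})=d(\lambda^{*})$, whence optimality follows from weak duality.
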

\begin{proof}
Let $\bar \lambda$ be an optimal solution to the  dual problem \eqref{dual_for_SLQ}. Then, by \eqref{lambda}  and the optimality of $\bar \lambda$, for any $ \mu\in L_{\mathcal{F}_{T}}^2(\Omega;\mathbb{R}^\ell)$, we obtain
\begin{align}\label{eq linear1}
0=&\lim\limits_{\varepsilon\to 0^{+}}\frac{d(\bar{\lambda}+\varepsilon\mu)-d(\bar{\lambda})}{\varepsilon}\nonumber\\ =&-\mathbb{E}\int_{0}^{T}\big\langle K(t)^{-1}(B(t)^{\top}\varphi_{\bar{\lambda}}(t)+D(t)^{\top}\psi_{\bar{\lambda}}(t)),B(t)^{\top}\varphi_{\mu}(t)+D(t)^{\top}\psi_{\mu}(t)\big\rangle\mathrm{d}t\nonumber\\
&\quad-\mathbb{E}\langle b,\mu\rangle+\langle \varphi_{\mu}(0),x\rangle,
\end{align}
where $(\varphi_{\mu},\psi_{\mu})$ is the solution to \eqref{eq 261} with final datum $M^{\top}\lambda$ replaced by $M^{\top}\mu$.

Applying It\^{o}'s formula to $\langle\bar{\mathbb{X}}^{x,\bar \lambda}(\cdot),\varphi_{\mu}(\cdot)\rangle$, we get
\begin{align}\label{eq 3.26}
&\mathbb{E}\langle M\bar{\mathbb{X}}^{x,\bar \lambda}(T),\mu\rangle\nonumber\\
&=\mathbb{E}\langle \bar{\mathbb{X}}^{x,\bar \lambda}(T),M^{\top}\mu\rangle\nonumber\\
&=\langle \varphi_{\mu}(0),x\rangle-\mathbb{E}\int_{0}^{T}\big\langle K(t)^{-1}(B(t)^{\top}\varphi_{\bar{\lambda}}(t)+D(t)^{\top}
\psi_{\bar{\lambda}}(t)),B(t)^{\top}\varphi_{\mu}(t)+D(t)^{\top}\psi_{\mu}(t)\big\rangle\mathrm{d}t.
\end{align}

Combining \eqref{eq linear1} with \eqref{eq 3.26}, we obtain that
\begin{align*}
M\bar{\mathbb{X}}^{x,\bar \lambda}(T)-b=0,\quad a.s.
\end{align*}
is the first-order necessary condition for the optimal solution $\bar \lambda$ to the dual problem \eqref{dual_for_SLQ}.
Then the conclusion follows from Proposition \ref{prop exp_lambda}.
\end{proof}

\begin{remark}
By Theorem  \ref{th bar lambda}, we obtain a closed-form solution to the constrained stochastic LQ problem (CSLQ). However, it is in general difficult to gain the optimal parameter $\bar \lambda$ by solving the first-order necessary condition (\ref{1st conditon for dual}).
\end{remark}

\section{Augmented Lagrangian method}\label{sec3.2}

In this section, we propose an augmented Lagrangian  method (ALM) for solving (CSLQ) and prove its convergence.

For any $\lambda\in L_{\mathcal{F}_{T}}^{2}\left(\Omega ; \mathbb{R}^{\ell}\right)$ and $u\in L^{2}_{\mathbb{F}}(0,T;\mathbb{R}^m)$, the augmented Lagrangian functional for (CSLQ) is defined by
\begin{equation}\label{eq 5}
\begin{split}
L_{\rho}(u,\lambda)&\triangleq J(u)+\mathbb{E}\langle \lambda,MX^{x,u}(T)-b\rangle+\frac{\rho}{2}\mathbb{E}\big| MX^{x,u}(T)-b\big|^2\\
&=\frac{1}{2}\mathbb{E}\Bigg[\int_{0}^{T}\langle Q(t)X^{x,u}(t),X^{x,u}(t)\rangle+\langle R(t)u(t),u(t)\rangle\mathrm{d}t+\langle GX^{x,u}(T),X^{x,u}(T)\rangle\Bigg]\\
		&\qquad+\mathbb{E}\langle \lambda,MX^{x,u}(T)-b\rangle+\frac{\rho}{2}\mathbb{E}\big| MX^{x,u}(T)-b\big|^2,
\end{split}
\end{equation}
where~$\rho>0$ is called the penalty parameter.

The ALM for (CSLQ) is defined as follows.
\begin{algorithm}
\caption{ALM for (CSLQ)}
\begin{algorithmic}
    \State Step 0.  Let $k=0$. Choose  $\lambda^0\in L_{\mathcal{F}_{T}}^2(\Omega;\mathbb{R}^\ell)$, $u^{0}\in L_{\mathbb{F}}^2(0,T;\mathbb{R}^m)$, $  \{r^{k}\}_{k=0}^{\infty}\subset(0,\infty)$.
 \State Step 1.  Calculate $u^{k+1}$ such that
      \begin{equation}\label{eq suanfa2}
 L_{\rho}(u^{k+1},\lambda^k)=\inf_{u\in L_{\mathbb{F}}^2(0,T;\mathbb{R}^m)}L_{\rho}(u,\lambda^k).
    \end{equation}
\State Step 2. Update the multiplier by
\begin{align}\label{eq suanfa3}
    \lambda^{k+1}=\lambda^k+r^{k}\big(MX^{x,u^{k+1}}(T)-b\big).
\end{align}
Let $k:=k+1$ and return to Step 1.
\end{algorithmic}
\end{algorithm}

\begin{remark}
The unconstrained stochastic LQ sub-problem
(\ref{eq suanfa2}) can be solved by constructing its optimal feedback solution. Let us
consider the   Riccati equation
\begin{equation} \label{Riccatiequ}	
\left\{\!\!\!
\begin{array}{l}
dP_{\rho}(t)=-\big[P_{\rho}(t)A(t)+A(t)^{\top}P_{\rho}(t)+C(t)^{\top}P_{\rho}(t)C(t)
+Q(t)+\Lambda_{\rho}(t)C(t)+C(t)^{\top}\Lambda_{\rho}(t)\\[+0.5em]
\qquad\qquad -L_{\rho}(t)^{\top}K_{\rho}(t)^{-1}L_{\rho}(t)\big]dt+\Lambda_{\rho}(t)dW(t),
\quad  t\in [0,T],\\[+0.5em]
P_{\rho}(T)=G+\rho M^{\top}M
\end{array}
\right.
\end{equation}
and the backward stochastic differential equation
\begin{equation}\label{eq 12} 	
		\left\{\!\!\!
		\begin{array}{l}
		d\varphi_{\rho,\lambda^k}(t)=-\Big[\big(A(t)^{\top}-L_{\rho}(t)^{\top}K_{\rho}(t)^{-1}B(t)^{\top}\big)\varphi_{\rho,\lambda^k}(t)\\[+0.5em]
\qquad\qquad\quad\ +\big(C(t)^{\top}-L_{\rho}(t)^{\top}K_{\rho}(t)^{-1}D(t)^{\top}\big)\psi_{\rho,\lambda^k}(t)\Big]dt
+\psi_{\rho,\lambda^k}(t)dW(t),\quad  t\in [0,T],\\[+0.5em]
			\varphi_{\rho,\lambda^k}(T)=M^{\top}\lambda^k-\rho M^{\top}b,
		\end{array}
		\right.
	\end{equation}
where
\begin{equation}\label{eq xishup}
L_{\rho}(t)\!=\!B(t)^{\top} P_{\rho}(t)\!+\!D(t)^{\top} P_{\rho}(t) C(t)\!+\!D(t)^{\top} \Lambda_{\rho}(t), \  K_{\rho}(t)\!=\!R(t)\!+\!D(t)^{\top} P_{\rho}(t) D(t),\ t\!\in\![0,T].
\end{equation}
Let us define the functional
\begin{align*}
J^{0}_{\rho}(u)\triangleq\frac{1}{2}\mathbb{E}\Bigg[\int_{0}^{T}\langle Q(t)X^{0,u}(t),X^{0,u}(t)\rangle+\langle R(t)u(t),u(t)\rangle\mathrm{d}t+\langle (G+\rho M^{\top}M)X^{0,u}(T),X^{0,u}(T)\rangle\Bigg],
\end{align*}
where $u\in L^{2}_{\mathbb{F}}(0,T;\mathbb{R}^m)$ and the state $X^{0,u}(\cdot)$ is the solution to control system (\ref{eq controlsys}) with control $u(\cdot)$ and initial datum $0$.
Under condition (A3),
\begin{align*}
J^{0}_{\rho}(u)\ge J^{0}(u)\ge \delta\mathbb{E} \int_{0}^{T}|u(t)|^{2} \mathrm{d} t, \qquad u\in L^{2}_{\mathbb{F}}(0,T;\mathbb{R}^m).
\end{align*}
Similar to Lemma \ref{lemma existence}, we can prove that  $L_{\rho}(\cdot,\lambda^k)$ is strongly convex on $L^{2}_{\mathbb{F}}(0,T;\mathbb{R}^m)$ and, the unconstrained stochastic LQ sub-problem (\ref{eq suanfa2}) admits a unique optimal solution. In addition, by \cite[Theorem 6.1]{SunXiongYong2021}, the perturbed Riccati equation (\ref{Riccatiequ}) has unique solution $\big(P_{\rho}(\cdot),\Lambda_{\rho}(\cdot)\big)\in L^{\infty}_{\mathbb{F}}\big(\Omega;C([0,T];\mathbb{S}^n)\big)\times L^{2}_{\mathbb{F}}(0,T;\mathbb{S}^n)$ such that
\begin{align*}
R(t)+D(t)^{\top}P_{\rho}(t)D(t)\ge\delta^{'}I_{m}, \quad \text{a.e. } t\in [0,T],\ a.s.
\end{align*}	
for some $\delta^{'}>0$. Then, the BSDE (\ref{eq 12}) also admits a unique solution $(\varphi_{\rho,\lambda^k}, \psi_{\rho,\lambda^k})$. By the standard theory of unconstrained stochastic LQ problem, it can be shown that the optimal solution to the unconstrained stochastic LQ sub-problem (\ref{eq suanfa2}), denoted by $u_{\rho,\lambda^k}$, has the feedback form
\begin{align*}
u_{\rho,\lambda^k}(t)=-K_{\rho}(t)^{-1}\Big[L_{\rho}(t)X^{x,u_{\rho,\lambda^k}}(t)+B(t)^{\top}\varphi_{\rho,\lambda^k}(t)
+D(t)^{\top}\psi_{\rho,\lambda^k}(t)\Big], \quad \text{a.e. } t\in [0,T], \text{a.s.}
\end{align*}
Therefore, to solve the unconstrained stochastic LQ sub-problem (\ref{eq suanfa2}), we only need to solve the Riccati equation (\ref{Riccatiequ}) and the BSDE  (\ref{eq 12}).
\end{remark}

Now, let us prove the convergence of the ALM for (CSLQ). To this end, we need some technical lemmas. First, we prove that the saddle points of the Lagrangian functional $L(\cdot,\cdot)$ coincide with those of the augmented Lagrangian functional $L_{\rho}(\cdot,\cdot)$.
\begin{lemma}\label{lemmas}
$(\bar{u},\bar{\lambda})$ is a saddle point of $L(\cdot,\cdot)$ if and only if it is a saddle point of $L_{\rho}(\cdot,\cdot)$.
\end{lemma}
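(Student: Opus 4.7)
\textbf{Proof proposal for Lemma \ref{lemmas}.}

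The plan is to exploit the simple identity
\[
L_{\rho}(u,\lambda)=L(u,\lambda)+\frac{\rho}{2}\,\mathbb{E}\bigl|MX^{x,u}(T)-b\bigr|^{2},
\]
so that the two functionals differ only by a nonnegative, $\lambda$-independent penalty that vanishes precisely on the feasible set. I would first handle the saddle inequality in $\lambda$, which is the painless half: the penalty term drops out of the difference $L_{\rho}(\bar u,\lambda)-L_{\rho}(\bar u,\bar\lambda)=L(\bar u,\lambda)-L(\bar u,\bar\lambda)=\mathbb{E}\langle\lambda-\bar\lambda,M\bar X^{x,\bar u}(T)-b\rangle$, so the statement ``$L_{\rho}(\bar u,\lambda)\le L_{\rho}(\bar u,\bar\lambda)$ for all $\lambda$'' is equivalent to ``$L(\bar u,\lambda)\le L(\bar u,\bar\lambda)$ for all $\lambda$,'' and either one is equivalent to feasibility $M\bar X^{x,\bar u}(T)-b=0$ a.s. At a feasible $\bar u$ the penalty vanishes, hence $L_{\rho}(\bar u,\bar\lambda)=L(\bar u,\bar\lambda)$.

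The forward direction of the $u$-inequality is then immediate: if $(\bar u,\bar\lambda)$ is a saddle point of $L$, then $\bar u$ is feasible and for any $u\in L^{2}_{\mathbb F}(0,T;\mathbb R^{m})$,
\[
L_{\rho}(u,\bar\lambda)\ge L(u,\bar\lambda)\ge L(\bar u,\bar\lambda)=L_{\rho}(\bar u,\bar\lambda).
\]

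The main obstacle is the converse $u$-inequality. Knowing only $L_{\rho}(\bar u,\bar\lambda)\le L_{\rho}(u,\bar\lambda)$ gives the weaker bound $L(\bar u,\bar\lambda)\le L(u,\bar\lambda)+\tfrac{\rho}{2}\mathbb{E}|MX^{x,u}(T)-b|^{2}$, from which the penalty term must be removed. To do this I would exploit the affine dependence $u\mapsto X^{x,u}$: the perturbation $u_{t}:=\bar u+t(u-\bar u)$ satisfies $X^{x,u_{t}}=(1-t)\bar X^{x,\bar u}+tX^{x,u}$, so using feasibility of $\bar u$,
\[
MX^{x,u_{t}}(T)-b=t\bigl(MX^{x,u}(T)-b\bigr),\qquad t\in[0,1].
\]
Plugging $u_{t}$ into the saddle inequality for $L_{\rho}$, using convexity of $J$ (from Lemma \ref{lemma existence}) to bound $J(u_{t})\le(1-t)J(\bar u)+tJ(u)$, and canceling the common term $(1-t)J(\bar u)$, I obtain
\[
tJ(\bar u)\le tJ(u)+t\,\mathbb{E}\langle\bar\lambda,MX^{x,u}(T)-b\rangle+\frac{\rho t^{2}}{2}\mathbb{E}\bigl|MX^{x,u}(T)-b\bigr|^{2}.
\]
Dividing by $t>0$ and letting $t\to0^{+}$ kills the quadratic remainder and yields exactly $L(\bar u,\bar\lambda)=J(\bar u)\le L(u,\bar\lambda)$, completing the saddle point property for $L$.

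In summary, the only substantive step is the backward direction of the $u$-inequality, and the key ingredients there are (i) the linearity/affineness of $u\mapsto MX^{x,u}(T)-b$, which quadratically rescales the penalty along the segment from $\bar u$ to $u$, and (ii) the convexity of $J$ established in Lemma \ref{lemma existence}; everything else is a direct manipulation of the identity relating $L$ and $L_{\rho}$.
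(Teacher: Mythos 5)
Your proof is correct and follows essentially the same route as the paper: feasibility of $\bar u$ extracted from the $\lambda$-side inequality, the forward $u$-inequality via nonnegativity of the penalty, and the backward $u$-inequality via the segment $\bar u+t(u-\bar u)$, convexity of $J$, the linear scaling $MX^{x,u_t}(T)-b=t\bigl(MX^{x,u}(T)-b\bigr)$, and the limit $t\to0^{+}$ to kill the quadratic term. Your observation that the $\lambda$-side saddle inequalities for $L$ and $L_{\rho}$ coincide identically is a slightly tidier packaging, but the substance matches the paper's argument step for step.
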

\begin{proof}
If $(\bar{u},\bar{\lambda})$ is a saddle point of $L(\cdot,\cdot)$, then
\begin{equation}\label{eq 4.7}
L(\bar{u},\lambda)\le L(\bar{u},\bar{\lambda})\le  L(u,\bar{\lambda}),\quad \forall\   u\in L_{\mathbb{F}}^2(0,T;\mathbb{R}^m), \forall\  \lambda\in L_{\mathcal{F}_{T}}^2(\Omega;\mathbb{R}^\ell).
\end{equation}
Clearly, \eqref{eq 4.7} holds true only if
\begin{align*}
M\bar{X}^{x,\bar{u}}(T)-b=0,\ a.s.
\end{align*}
Then, we have
$$L_{\rho}(\bar{u},\lambda)=L(\bar{u},\lambda)+\frac{\rho}{2}\mathbb{E}\big| M\bar{X}^{x,\bar{u}}(T)-b\big|^2=L(\bar{u},\lambda) $$
and
\begin{align*}
L_{\rho}(\bar{u},\bar{\lambda})=L(\bar{u},\bar{\lambda}).
\end{align*}
It implies that
\begin{align}\label{1}
L_{\rho}(\bar{u},\lambda)\le L_{\rho}(\bar{u},\bar{\lambda}),\qquad \forall\ \lambda\in L_{\mathcal{F}_{T}}^2(\Omega;\mathbb{R}^\ell).
\end{align}
In addition, by
\begin{align*}
L(\bar{u},\bar{\lambda})\le L(u,\bar{\lambda}),\qquad \forall\ u\in L_{\mathbb{F}}^2(0,T;\mathbb{R}^m)
\end{align*}
and
\begin{align*}
\mathbb{E}\big| M\bar{X}^{x,\bar{u}}(T)-b\big|^2=0\le \mathbb{E}\big| MX^{x,u}(T)-b\big|^2,\quad \forall\ u\in L_{\mathbb{F}}^2(0,T;\mathbb{R}^m),
\end{align*}
we have
\begin{equation*}
L(\bar{u},\bar{\lambda})+\frac{\rho}{2}\mathbb{E}\big| M\bar{X}^{x,\bar{u}}(T)-b\big|^2\le L(u,\bar{\lambda})+ \frac{\rho}{2}\mathbb{E}\big| MX^{x,u}(T)-b\big|^2, \quad \forall\ u\in L_{\mathbb{F}}^2(0,T;\mathbb{R}^m).
\end{equation*}
Therefore,
\begin{align}\label{2}
L_{\rho}(\bar{u},\bar{\lambda})\le L_{\rho}(u,\bar{\lambda}), \quad \forall\ u\in L_{\mathbb{F}}^2(0,T;\mathbb{R}^m).
\end{align}
Combining \eqref{1} with \eqref{2}, we obtain that $(\bar{u},\bar{\lambda})$ is a saddle point of $L_{\rho}(\cdot,\cdot)$.

Next, suppose that $(\bar{u},\bar{\lambda})$ is a saddle point of $L_{\rho}(\cdot,\cdot)$, i.e.,
\begin{align*}
L_{\rho}(\bar{u},\lambda)\le L_{\rho}(\bar{u},\bar{\lambda})\le  L_{\rho}(u,\bar{\lambda}),\quad \forall\   u\in L_{\mathbb{F}}^2(0,T;\mathbb{R}^m), \forall\  \lambda\in L_{\mathcal{F}_{T}}^2(\Omega;\mathbb{R}^\ell).
\end{align*}
Then, we can also obtain that $M\bar{X}^{x,\bar{u}}(T)-b=0$, a.s. It implies that
\begin{align}\label{4}
L(\bar{u},\lambda)=L_{\rho}(\bar{u},\lambda)\le L_{\rho}(\bar{u},\bar{\lambda})=L(\bar{u},\bar{\lambda}),\qquad \forall \lambda\in L_{\mathcal{F}_{T}}^2(\Omega;\mathbb{R}^\ell),
\end{align}
and
\begin{align*}
J(\bar{u})=L(\bar{u},\bar{\lambda})=L_{\rho}(\bar{u},\bar{\lambda})\le L_{\rho}(u,\bar{\lambda}),\quad \forall u\in L_{\mathbb{F}}^2(0,T;\mathbb{R}^m).
\end{align*}

Then, by the convexity of $J(\cdot)$, for any $u\in L_{\mathbb{F}}^2(0,T;\mathbb{R}^m)$ and  $\theta\in(0,1)$, we have
\begin{align*}
J(\bar{u})&\le L_{\rho}(\bar{u}+\theta(u-\bar{u}),\bar{\lambda})\\
&=J\big(\bar{u}+\theta(u-\bar{u})\big)+\mathbb{E}\langle \bar{\lambda},MX^{x,\bar{u}+\theta(u-\bar{u})}(T)-b\rangle+\frac{\rho}{2}\mathbb{E}\big| MX^{x,\bar{u}+\theta(u-\bar{u})}(T)-b\big|^2\\
&\le \theta J(u)+(1-\theta)J(\bar{u})+\mathbb{E}\langle \bar{\lambda},\theta MX^{x,u}(T)+(1-\theta)M\bar{X}^{x,\bar{u}}(T)-b\rangle\\
&\qquad+\frac{\rho}{2}\mathbb{E}\big|\theta MX^{x,u}(T)+(1-\theta)M\bar{X}^{x,\bar{u}}(T)-b\big|^2\\
&=\theta J(u)+(1-\theta)J(\bar{u})+\theta\mathbb{E}\langle \bar{\lambda}, MX^{x,u}(T)-b\rangle+\frac{\rho\theta^2}{2}\mathbb{E}\big|  MX^{x,u}(T)-b\big|^2.
\end{align*}
It implies that
\begin{align*}
0\le J(u)-J(\bar{u})+\mathbb{E}\langle \bar{\lambda}, MX^{x,u}(T)-b\rangle+\frac{\rho\theta}{2}\mathbb{E}\big|MX^{x,u}(T)-b\big|^2,\quad \forall u\in L_{\mathbb{F}}^2(0,T;\mathbb{R}^m).
\end{align*}
Letting $\theta\to 0^{+}$, we have
\begin{align*}
J(\bar{u})\le J(u)+\mathbb{E}\langle \bar{\lambda}, MX^{x,u}(T)-b\rangle, \quad \forall u\in L_{\mathbb{F}}^2(0,T;\mathbb{R}^m),
\end{align*}
i.e.,
\begin{align}\label{5}
L(\bar{u},\bar{\lambda})\le L(u,\bar{\lambda}),\qquad \forall u\in L_{\mathbb{F}}^2(0,T;\mathbb{R}^m).
\end{align}
Combining \eqref{4} with \eqref{5}, we obtain that $(\bar{u},\bar{\lambda})$ is a saddle point of $L(\cdot,\cdot)$.

This completes the proof of Lemma \ref{lemmas}.
\end{proof}

\begin{lemma}\label{lemmass}
Let (A1)--(A4) hold. Then, $\bar{u}$ is an optimal control of (CSLQ) if and only if there is $\bar{\lambda}$ such that $(\bar{u},\bar{\lambda})$ is a saddle point of $L_{\rho}(\cdot,\cdot)$.
\end{lemma}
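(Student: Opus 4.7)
The plan is to reduce the equivalence to the already-established Theorem \ref{th strong dual} and Lemma \ref{lemmas}, so that the heavy lifting (existence of a dual optimizer and the saddle-point property of $L$) is inherited directly. Since Lemma \ref{lemmas} asserts that the saddle points of $L(\cdot,\cdot)$ and $L_\rho(\cdot,\cdot)$ coincide, it is enough to prove that $\bar u$ is optimal for (CSLQ) if and only if there exists $\bar\lambda\in L_{\mathcal{F}_T}^2(\Omega;\mathbb{R}^\ell)$ such that $(\bar u,\bar\lambda)$ is a saddle point of $L$; then invoking Lemma \ref{lemmas} closes the argument.

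For the forward direction, I would simply appeal to Theorem \ref{th strong dual}(ii): if $\bar u$ is the (unique) optimal control of (CSLQ), then the dual problem \eqref{dual_for_SLQ} is solvable, and for any solution $\bar\lambda$ of the dual the pair $(\bar u,\bar\lambda)$ is a saddle point of $L$. By Lemma \ref{lemmas}, it is then also a saddle point of $L_\rho$.

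For the backward direction, suppose $(\bar u,\bar\lambda)$ is a saddle point of $L_\rho$. By Lemma \ref{lemmas}, $(\bar u,\bar\lambda)$ is also a saddle point of $L$, so
\begin{equation*}
L(\bar u,\lambda)\le L(\bar u,\bar\lambda)\le L(u,\bar\lambda),\qquad\forall\,u\in L^2_{\mathbb{F}}(0,T;\mathbb{R}^m),\ \lambda\in L^2_{\mathcal{F}_T}(\Omega;\mathbb{R}^\ell).
\end{equation*}
The left inequality, after substituting the definition of $L$ and letting $\lambda$ vary (for instance, by replacing $\lambda$ by $\bar\lambda+t\mu$ and taking $t\to\pm\infty$ along any $\mu\in L^2_{\mathcal{F}_T}(\Omega;\mathbb{R}^\ell)$), forces $\mathbb{E}\langle\mu,M\bar X^{x,\bar u}(T)-b\rangle=0$ for every $\mu$, whence $M\bar X^{x,\bar u}(T)-b=0$ a.s., so $\bar u\in U_{ad}$. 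The right inequality then reads $J(\bar u)\le J(u)+\mathbb{E}\langle\bar\lambda,MX^{x,u}(T)-b\rangle$ for every admissible control $u$; restricting to $u\in U_{ad}$ makes the last term vanish, giving $J(\bar u)\le J(u)$ for all $u\in U_{ad}$. Hence $\bar u$ is optimal for (CSLQ).

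No serious obstacle is expected: the argument is a direct chaining of Theorem \ref{th strong dual} (for existence and saddle-point status on the $L$ side) with Lemma \ref{lemmas} (for transferring the saddle-point property between $L$ and $L_\rho$). The only small care point is the extraction of feasibility $M\bar X^{x,\bar u}(T)=b$ from the inequality $L(\bar u,\lambda)\le L(\bar u,\bar\lambda)$; this is the same device used implicitly in the proof of Lemma \ref{lemmas} and requires no new machinery.
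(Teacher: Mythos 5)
Your proof is correct, and the forward direction is exactly the paper's: Theorem \ref{th strong dual}(ii) to get a saddle point of $L$, then Lemma \ref{lemmas} to transfer it to $L_{\rho}$. The backward direction, however, is decomposed differently. You first transfer the saddle point from $L_{\rho}$ back to $L$ via Lemma \ref{lemmas} and then extract feasibility and optimality from the $L$-saddle inequalities; the paper never invokes Lemma \ref{lemmas} in this direction, but works directly with $L_{\rho}$: feasibility $M\bar X^{x,\bar u}(T)-b=0$ a.s.\ follows from the left saddle inequality exactly by your $t\to\pm\infty$ device (which applies verbatim to $L_{\rho}$, since the penalty term does not depend on $\lambda$), and then optimality comes from the one-line chain
\begin{equation*}
J(\bar u)=L_{\rho}(\bar u,\bar\lambda)\le\inf_{u\in L^{2}_{\mathbb{F}}(0,T;\mathbb{R}^m)}L_{\rho}(u,\bar\lambda)\le\inf_{u\in U_{ad}}L_{\rho}(u,\bar\lambda)=\inf_{u\in U_{ad}}J(u),
\end{equation*}
because $L_{\rho}(\cdot,\bar\lambda)$ coincides with $J$ on $U_{ad}$. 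The trade-off: the paper's direct route is self-contained and cheaper, whereas yours leans on the nontrivial direction of Lemma \ref{lemmas} (from an $L_{\rho}$-saddle to an $L$-saddle), whose proof required the convexity argument with $\theta\to 0^{+}$; since that lemma is already established, your argument is perfectly valid, and it has the mild structural benefit of rendering the whole statement a pure corollary of Theorem \ref{th strong dual} and Lemma \ref{lemmas}. One cosmetic slip: the right saddle inequality gives $J(\bar u)\le J(u)+\mathbb{E}\langle\bar\lambda,MX^{x,u}(T)-b\rangle$ for \emph{all} $u\in L^{2}_{\mathbb{F}}(0,T;\mathbb{R}^m)$, not merely admissible ones -- you then correctly restrict to $U_{ad}$, so nothing breaks.
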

\begin{proof}
It has been proved in Theorem \ref{th strong dual} that if $\bar{u}$ is an optimal control of (CSLQ), then there is $\bar{\lambda}$ such that $(\bar{u},\bar{\lambda})$ is a saddle point of $L(\cdot,\cdot)$. Thus, by Lemma \ref{lemmas}, $(\bar{u},\bar{\lambda})$ is a saddle point of $L_{\rho}(\cdot,\cdot)$.

On the other hand, if $(\bar{u},\bar{\lambda})$ is a saddle point of $L_{\rho}(\cdot,\cdot)$, then
\begin{align*}
M\bar{X}^{x,\bar{u}}(T)-b=0,\ a.s.
\end{align*}
and
\begin{equation*}
J(\bar{u})=L_{\rho}(\bar u,\bar{\lambda}) \le\inf_{u\in L_{\mathbb{F}}^2(0,T;\mathbb{R}^m)}L_{\rho}(u,\bar{\lambda})\le \inf_{\substack{ u\in L^{2}_{\mathbb{F}}(0,T;\mathbb{R}^m)\\ MX^{x,u}(T)-b=0}}L_{\rho}(u,\bar{\lambda})= \inf_{\substack{ u\in L^{2}_{\mathbb{F}}(0,T;\mathbb{R}^m) \\ MX^{x,u}(T)-b=0}}J(u).
\end{equation*}
Therefore, $\bar u$ is an optimal control of (CSLQ).

This completes the proof of Lemma \ref{lemmass}.
\end{proof}

By Lemma \ref{lemmass}, the constrained stochastic LQ problem (CSLQ) is equivalent to the saddle point problem of  $L_{\rho}(\cdot,\cdot)$.

For the unconstrained stochastic LQ sub-problem \eqref{eq suanfa2}, the following first-order  necessary and sufficient condition  holds true.
\begin{lemma}\label{3}
Let (A1)--(A3) hold. Then, $u_{\rho,\lambda^{k}}$ is the unique optimal control to the unconstrained stochastic LQ sub-problem (\ref{eq suanfa2}) if and only if
$$
R(t)u_{\rho,\lambda^{k}}(t)-B(t)^{\top}p_{\rho,\lambda^{k}}(t)-D(t)^{\top}q_{\rho,\lambda^{k}}(t)=0,\qquad a.e.\ t\in[0,T],\ a.s.
$$
where $(p_{\rho,\lambda^{k}},q_{\rho,\lambda^{k}}\big)$ is the solution to the BSDE
\begin{equation}\label{eq 3.38} 	
\left\{\!\!\!
\begin{array}{l} dp_{\rho,\lambda^{k}}(t)\!=\!-\Big[\!A(t)^{\top}\!p_{\rho,\lambda^{k}}(t)\!+\!C(t)^{\top}\!q_{\rho,\lambda^{k}}(t)
\!-\!Q(t)X^{x,u_{\rho,\lambda^{k}}}(t)\!\Big]dt\!+\!q_{\rho,\lambda^{k}}(t)dW(t),\ t\!\in\! [0,T],\\[+1em]
p_{\rho,\lambda^{k}}(T)=-(G+\rho M^{\top}M)X^{x,u_{\rho,\lambda^{k}}}(T)-M^{\top}\lambda^{k}+\rho M^{\top}b,
\end{array}
\right.
\end{equation}
where $X^{x,u_{\rho,\lambda^{k}}}$ is the solution to the linear control system (\ref{eq controlsys}) with control $u_{\rho,\lambda^{k}}$ and initial datum $x$.
\end{lemma}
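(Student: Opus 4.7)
The plan is to derive the stated condition as the vanishing of the Gâteaux derivative of $L_{\rho}(\cdot,\lambda^{k})$ at $u_{\rho,\lambda^{k}}$, and to use the BSDE \eqref{eq 3.38} as the adjoint equation that converts state-dependent terms into control-dependent ones by an It\^{o} duality argument. Since $J^{0}_{\rho}(v)\ge \delta\mathbb{E}\int_{0}^{T}|v(t)|^{2}dt$, the functional $L_{\rho}(\cdot,\lambda^{k})$ is strongly convex on $L^{2}_{\mathbb{F}}(0,T;\mathbb{R}^{m})$, so optimality is equivalent to the first-order condition; this removes the need to prove necessity and sufficiency separately.

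First, I would fix an arbitrary perturbation $v\in L^{2}_{\mathbb{F}}(0,T;\mathbb{R}^{m})$ and let $Y^{0,v}(\cdot)$ denote the solution of \eqref{eq controlsys} with zero initial state and control $v$. By linearity of the state equation, $X^{x,u_{\rho,\lambda^{k}}+\varepsilon v}=X^{x,u_{\rho,\lambda^{k}}}+\varepsilon Y^{0,v}$. Expanding the quadratic expression \eqref{eq 5} and differentiating at $\varepsilon=0$ gives
\begin{align*}
\frac{d}{d\varepsilon}L_{\rho}(u_{\rho,\lambda^{k}}+\varepsilon v,\lambda^{k})\Big|_{\varepsilon=0}
&=\mathbb{E}\!\int_{0}^{T}\!\Big[\langle Q X^{x,u_{\rho,\lambda^{k}}},Y^{0,v}\rangle+\langle Ru_{\rho,\lambda^{k}},v\rangle\Big]dt\\
&\quad+\mathbb{E}\big\langle (G+\rho M^{\top}M)X^{x,u_{\rho,\lambda^{k}}}(T)+M^{\top}\lambda^{k}-\rho M^{\top}b,\ Y^{0,v}(T)\big\rangle.
\end{align*}

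Second, I would apply It\^{o}'s formula to $\langle p_{\rho,\lambda^{k}}(\cdot),Y^{0,v}(\cdot)\rangle$. The drift of $p_{\rho,\lambda^{k}}$ produces $-\langle A^{\top}p+C^{\top}q-QX^{x,u_{\rho,\lambda^{k}}},Y^{0,v}\rangle$, the drift of $Y^{0,v}$ produces $\langle p,AY^{0,v}+Bv\rangle$, and the quadratic covariation between the $dW$ parts yields $\langle q,CY^{0,v}+Dv\rangle$; the $A$- and $C$-terms cancel, leaving only $\langle QX^{x,u_{\rho,\lambda^{k}}},Y^{0,v}\rangle+\langle B^{\top}p+D^{\top}q,v\rangle$. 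Taking expectations, using $Y^{0,v}(0)=0$, and invoking the prescribed terminal condition $p_{\rho,\lambda^{k}}(T)=-(G+\rho M^{\top}M)X^{x,u_{\rho,\lambda^{k}}}(T)-M^{\top}\lambda^{k}+\rho M^{\top}b$ — which is engineered precisely to absorb the three endpoint contributions above — one obtains
\begin{equation*}
\mathbb{E}\!\int_{0}^{T}\!\langle QX^{x,u_{\rho,\lambda^{k}}},Y^{0,v}\rangle dt+\mathbb{E}\big\langle (G+\rho M^{\top}M)X^{x,u_{\rho,\lambda^{k}}}(T)+M^{\top}\lambda^{k}-\rho M^{\top}b,Y^{0,v}(T)\big\rangle=-\mathbb{E}\!\int_{0}^{T}\!\langle B^{\top}p_{\rho,\lambda^{k}}+D^{\top}q_{\rho,\lambda^{k}},v\rangle dt.
\end{equation*}
Substituting this into the expression for the Gâteaux derivative collapses everything to $\mathbb{E}\!\int_{0}^{T}\!\langle Ru_{\rho,\lambda^{k}}-B^{\top}p_{\rho,\lambda^{k}}-D^{\top}q_{\rho,\lambda^{k}},v\rangle dt$.

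Finally, strong convexity of $L_{\rho}(\cdot,\lambda^{k})$ shows that $u_{\rho,\lambda^{k}}$ is the unique minimizer iff this linear functional of $v$ vanishes for every $v\in L^{2}_{\mathbb{F}}(0,T;\mathbb{R}^{m})$, which by the usual variational argument is equivalent to $Ru_{\rho,\lambda^{k}}-B^{\top}p_{\rho,\lambda^{k}}-D^{\top}q_{\rho,\lambda^{k}}=0$ $dt\otimes d\mathbb{P}$-almost everywhere. The only delicate point in this plan is keeping track of signs to confirm that the terminal data chosen in \eqref{eq 3.38} exactly cancels the three boundary contributions from $G$, $\rho M^{\top}M$, and $M^{\top}\lambda^{k}-\rho M^{\top}b$; once that bookkeeping is right, the rest is routine It\^{o} calculus and duality.
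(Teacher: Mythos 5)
Your proposal is correct and follows essentially the same route as the paper: the same G\^ateaux-derivative expansion of $L_{\rho}(\cdot,\lambda^{k})$, the same It\^{o} duality with the adjoint BSDE \eqref{eq 3.38} turning the state and terminal terms into $-\mathbb{E}\int_{0}^{T}\langle B^{\top}p_{\rho,\lambda^{k}}+D^{\top}q_{\rho,\lambda^{k}},v\rangle\,dt$, and the same use of (A3). The only difference is packaging: you invoke strong convexity to get the optimality--stationarity equivalence in one stroke, whereas the paper proves necessity from the one-sided derivative inequality and sufficiency by the explicit quadratic expansion $L_{\rho}(u_{\rho,\lambda^{k}}+v,\lambda^{k})\ge L_{\rho}(u_{\rho,\lambda^{k}},\lambda^{k})+\delta\,\mathbb{E}\int_{0}^{T}|v(t)|^{2}\,dt$ --- which is just the strong-convexity argument written out by hand.
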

\begin{proof}
For any $\varepsilon>0$ and $v\in L^{2}_{\mathbb{F}}(0,T;\mathbb{R}^m)$,
\begin{align*}
&L_{\rho}(u_{\rho,\lambda^{k}}+\varepsilon v,\lambda^{k})-L_{\rho}(u_{\rho,\lambda^{k}},\lambda^{k})\\
&=\varepsilon\bigg[\mathbb{E}\int_{0}^{T}\big\langle Q(t)X^{x,u_{\rho,\lambda^{k}}}(t),X^{0,v}(t)\big\rangle+\big\langle R(t)u_{\rho,\lambda^{k}}(t),v(t)\big\rangle dt\\
&\qquad+\mathbb{E}\big\langle (G+\rho M^{\top}M)X^{x,u_{\rho,\lambda^{k}}}(T)+M^{\top}\lambda^{k}-\rho M^{\top}b,X^{0,v}(T)\big\rangle\bigg]\\
&\qquad+\frac{\varepsilon^2}{2}\Bigg[\mathbb{E}\int_{0}^{T}\big\langle Q(t)X^{0,v}(t),X^{0,v}(t)\big\rangle+\big\langle R(t)v(t),v(t)\big\rangle dt\\
&\qquad+\mathbb{E}\big\langle (G+\rho M^{\top}M)X^{0,v}(T),X^{0,v}(T)\big\rangle\bigg].
\end{align*}
Here $X^{0,v}$ is the solution to the linear control system \eqref{eq controlsys} with control $v$ and initial datum $0$.
If $u_{\rho,\lambda^{k}}$ is an optimal control, then
\begin{align}\label{eq bianfen}
0\le&\lim\limits_{\varepsilon\to 0^{+}}\frac{L_{\rho}(u_{\rho,\lambda^{k}}+\varepsilon v,\lambda^{k})-L_{\rho}(u_{\rho,\lambda^{k}},\lambda^{k})}{\varepsilon}\nonumber\\
=&\mathbb{E}\int_{0}^{T}\big\langle Q(t)X^{x,u_{\rho,\lambda^{k}}}(t),X^{0,v}(t)\big\rangle+\big\langle R(t)u_{\rho,\lambda^{k}}(t),v(t)\big\rangle dt\nonumber\\
\qquad&+\mathbb{E}\big\langle (G+\rho M^{\top}M)X^{x,u_{\rho,\lambda^{k}}}(T)+M^{\top}\lambda^{k}-\rho M^{\top}b,X^{0,v}(T)\big\rangle,\quad \forall v\in L^{2}_{\mathbb{F}}(0,T;\mathbb{R}^m).
\end{align}
By It\^{o}'s formula, we have
\begin{align}\label{eq formula}
&\mathbb{E}\big\langle (G+\rho M^{\top}M)X^{x,u_{\rho,\lambda^{k}}}(T)+M^{\top}\lambda^{k}-\rho M^{\top}b,X^{0,v}(T)\big\rangle
\nonumber\\
&=-\mathbb{E}\big\langle p_{\rho,\lambda^{k}}(T),X^{0,v}(T)\big\rangle\nonumber\\
&=-\mathbb{E}\int_{0}^{T}\big\langle B(t)^{\top} p_{\rho,\lambda^{k}}(t)+D(t)^{\top} q_{\rho,\lambda^{k}}(t),v(t)\big\rangle\mathrm{d}t-\mathbb{E}\int_{0}^{T}\big\langle Q(t)X^{x,u_{\rho,\lambda^{k}}}(t),X^{0,v}(t)\big\rangle dt.
\end{align}
Combining \eqref{eq bianfen} with \eqref{eq formula}, we obtain
\begin{align*}
\mathbb{E}\int_{0}^{T}\big\langle R(t)u_{\rho,\lambda^{k}}(t)-B(t)^{\top}p_{\rho,\lambda^{k}}(t)-D(t)^{\top}q_{\rho,\lambda^{k}}(t),v(t)\big\rangle dt\ge 0,\quad \forall\ v\in L^{2}_{\mathbb{F}}(0,T;\mathbb{R}^m),
\end{align*}
which implies
\begin{align*}
R(t)u_{\rho,\lambda^{k}}(t)-B(t)^{\top}p_{\rho,\lambda^{k}}(t)-D(t)^{\top}q_{\rho,\lambda^{k}}(t)=0,\qquad a.e.\ t\in[0,T], a.s.
\end{align*}
This proves the necessity.

Next, assume that $u_{\rho,\lambda^{k}}$ satisfies the condition
\begin{align*}
R(t)u_{\rho,\lambda^{k}}(t)-B(t)^{\top}p_{\rho,\lambda^{k}}(t)-D(t)^{\top}q_{\rho,\lambda^{k}}(t)=0,\qquad a.e.\ t\in[0,T], a.s.
\end{align*}
Then
\begin{align}\label{hengdeng}
\mathbb{E}\int_{0}^{T}\big\langle R(t)u_{\rho,\lambda^{k}}(t)-B(t)^{\top}p_{\rho,\lambda^{k}}(t)-D(t)^{\top}q_{\rho,\lambda^{k}}(t),v(t)\big\rangle dt= 0, \quad \forall\ v\in L^{2}_{\mathbb{F}}(0,T;\mathbb{R}^m).
\end{align}
By \eqref{eq formula} and \eqref{hengdeng}, we obtain
\begin{align}\label{eng}
&\mathbb{E}\int_{0}^{T}\big\langle Q(t)X^{x,u_{\rho,\lambda^{k}}}(t),X^{0,v}(t)\big\rangle+\big\langle R(t)u_{\rho,\lambda^{k}}(t),v(t)\big\rangle dt\nonumber\\
&+\mathbb{E}\big\langle (G+\rho M^{\top}M)X^{x,u_{\rho,\lambda^{k}}}(T)+M^{\top}\lambda^{k}-\rho M^{\top}b,X^{0,v}(T)\big\rangle= 0.
\end{align}
Then, by condition (A3), for any $v\in L^{2}_{\mathbb{F}}(0,T;\mathbb{R}^m)$
\begin{align*}
L_{\rho}(u_{\rho,\lambda^{k}}+v,\lambda^{k})&=L_{\rho}(u_{\rho,\lambda^{k}},\lambda^{k})+\mathbb{E}\int_{0}^{T} \big\langle Q(t)X^{0,v}(t),X^{0,v}(t)\big\rangle+\big\langle R(t)v(t),v(t)\big\rangle dt\\
&\quad+\mathbb{E}\big\langle (G+\rho M^{\top}M)X^{0,v}(T),X^{0,v}(T)\big\rangle\\
&\ge L_{\rho}(u_{\rho,\lambda^{k}},\lambda^{k})+J^{0}(v)\\
&\ge L_{\rho}(u_{\rho,\lambda^{k}},\lambda^{k})+\delta\mathbb{E}\int_{0}^{T}|v(t)|^2\mathrm{d}t,
\end{align*}
which implies that $u_{\rho,\lambda^{k}}$ is the unique optimal control to the unconstrained stochastic LQ sub-problem \eqref{eq suanfa2}. This proves the sufficiency.
\end{proof}

We have the following first-order necessary and sufficient condition for the constrained stochastic LQ problem (CSLQ).

\begin{lemma}\label{lemmasss}
Let (A1)--(A4) hold. Then, $\bar{u}$ is an optimal control of (CSLQ) if and only if there is $\bar \lambda\in L_{\mathcal{F}_{T}}^2(\Omega;\mathbb{R}^\ell)$ such that
\begin{equation}\label{1st conditon for primal}	
\left\{
\begin{array}{l} M\bar{X}^{x,\bar{u}}(T)-b=0,\ a.s.\\[+0.5em]			R(t)\bar{u}(t)-B(t)^{\top}p_{\rho,\bar{\lambda}}(t)
-D(t)^{\top}q_{\rho,\bar{\lambda}}(t)=0,\quad a.e.\ t\in [0,T], a.s.
\end{array}
\right.
\end{equation}
where $\big(p_{\rho,\bar{\lambda}}(\cdot),q_{\rho,\bar{\lambda}}(\cdot)\big)$ is the solution to BSDE (\ref{eq 3.38}) with $\lambda$ replaced by $\bar{\lambda}$.
\end{lemma}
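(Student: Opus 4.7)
The plan is to chain together the two characterization lemmas already in hand: Lemma~\ref{lemmass}, which identifies optimal controls of (CSLQ) with saddle points of $L_\rho$, and Lemma~\ref{3}, which characterizes the minimizer of $L_\rho(\cdot,\lambda^k)$ via the first-order condition $R u - B^\top p - D^\top q = 0$. The key observation is that the two identities in \eqref{1st conditon for primal} correspond respectively to the two saddle-point inequalities that define a saddle point of $L_\rho$.

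For the necessity direction, I would start from $\bar u$ optimal and invoke Lemma~\ref{lemmass} to produce $\bar\lambda\in L^2_{\mathcal{F}_T}(\Omega;\mathbb{R}^\ell)$ such that $(\bar u,\bar\lambda)$ is a saddle point of $L_\rho$. The first identity would then follow from $L_\rho(\bar u,\lambda)\le L_\rho(\bar u,\bar\lambda)$ for every $\lambda$: since $\lambda\mapsto L_\rho(\bar u,\lambda)$ is affine with linear part $\mathbb{E}\langle\lambda,M\bar X^{x,\bar u}(T)-b\rangle$, being bounded above uniformly in $\lambda$ forces $M\bar X^{x,\bar u}(T)-b=0$ almost surely. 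The second identity would follow from $L_\rho(\bar u,\bar\lambda)\le L_\rho(u,\bar\lambda)$ for every $u$, which says that $\bar u$ minimizes $L_\rho(\cdot,\bar\lambda)$ over $L^2_{\mathbb{F}}(0,T;\mathbb{R}^m)$; applying Lemma~\ref{3} with $\lambda^k=\bar\lambda$ and identifying $u_{\rho,\bar\lambda}$ with $\bar u$ yields the stated equation.

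For the sufficiency direction, I would reverse the two steps. The second identity together with the BSDE \eqref{eq 3.38} (with $\lambda$ replaced by $\bar\lambda$) is exactly the first-order condition in Lemma~\ref{3}, whose sufficiency part gives that $\bar u$ minimizes $L_\rho(\cdot,\bar\lambda)$, i.e.\ the lower saddle inequality. The first identity makes the linear and quadratic constraint terms of $L_\rho(\bar u,\lambda)$ vanish, so $L_\rho(\bar u,\lambda)=J(\bar u)=L_\rho(\bar u,\bar\lambda)$ for every $\lambda$, which is the upper saddle inequality (with equality). Hence $(\bar u,\bar\lambda)$ is a saddle point of $L_\rho$, and Lemma~\ref{lemmass} concludes that $\bar u$ is optimal for (CSLQ).

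I do not anticipate any serious obstacle: the statement is essentially a packaging of two previously established characterizations. The only mild care point is the identification of the state $X^{x,u_{\rho,\lambda^k}}$ appearing in the BSDE of Lemma~\ref{3} with $\bar X^{x,\bar u}$ when $\lambda^k=\bar\lambda$, which is automatic from the uniqueness of the minimizer of the strongly convex functional $L_\rho(\cdot,\bar\lambda)$ (cf.\ Lemma~\ref{lemma existence}) together with the feasibility of $\bar u$ provided by the first identity.
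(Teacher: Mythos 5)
Your proposal is correct and follows essentially the same route as the paper: both reduce the statement via Lemma~\ref{lemmass} to the saddle-point characterization of $L_\rho$, split the saddle-point property into the feasibility identity and the minimization of $L_\rho(\cdot,\bar\lambda)$, and then invoke the first-order argument of Lemma~\ref{3}. Your explicit justification of the upper saddle inequality (affinity in $\lambda$) and the uniqueness-based state identification merely spell out steps the paper compresses into ``Clearly'' and ``a similar argumentation in Lemma~\ref{3}.''
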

\begin{proof}
By Lemma \ref{lemmass}, $\bar{u}$ is an optimal control of (CSLQ) if and only if there is $\bar{\lambda}$ such that $(\bar{u},\bar{\lambda})$ is a saddle point of $L_{\rho}(\cdot,\cdot)$. Clearly, $(\bar{u},\bar{\lambda})$ is a saddle point of $L_{\rho}(\cdot,\cdot)$ if and only if
\begin{align*}
M\bar{X}^{x,\bar{u}}(T)-b=0,\qquad a.s.,
\end{align*}
and
\begin{align*}
L_{\rho}(\bar{u},\bar{\lambda})=\inf_{u\in L^{2}_{\mathbb{F}}(0,T;\mathbb{R}^m)}L_{\rho}(u,\bar{\lambda}).
\end{align*}
Then, the conclusion follows by a similar argumentation in Lemma \ref{3}.
\end{proof}

We are now in a position to establish the main result of this section, namely the convergence of ALM for (CSLQ).

\begin{theorem}\label{th 4.1}
Suppose that (A1)--(A4) hold true and let $r^{0}>0$. Then, for any $\{r^{k}\}$ such that $0<r^{0}\le r^{k}\le 2\rho$ and any $\lambda^{0}\in L_{\mathcal{F}_{T}}^2(\Omega;\mathbb{R}^{\ell})$, the sequence $\{u^k\}$ generated by the ALM  converges strongly to the unique solution $\bar{u}$ of (CSLQ) in $L^{2}_{\mathbb{F}}(0,T;\mathbb{R}^m)$.
\end{theorem}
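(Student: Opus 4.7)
The plan follows the classical ALM convergence argument: I will show that $\{\lambda^k\}$ is Fej\'er-monotone in $L^2_{\mathcal{F}_T}(\Omega;\mathbb{R}^\ell)$ with respect to any saddle-point multiplier $\bar\lambda$, with a summable deficit proportional to $\|u^{k+1}-\bar u\|^2_{L^2_{\mathbb{F}}}$; summability then forces $u^k\to\bar u$. By Theorem~\ref{th strong dual} combined with Lemma~\ref{lemmas}, there exists $\bar\lambda\in L^{2}_{\mathcal{F}_T}(\Omega;\mathbb{R}^\ell)$ such that $(\bar u,\bar\lambda)$ is simultaneously a saddle point of $L(\cdot,\cdot)$ and of $L_\rho(\cdot,\cdot)$. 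In particular, $M\bar X^{x,\bar u}(T)=b$ a.s., $\bar u$ is the unconstrained minimizer of $u\mapsto L(u,\bar\lambda)$, and by construction $u^{k+1}$ is the unconstrained minimizer of $u\mapsto L_\rho(u,\lambda^k)$.

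\textbf{One-step inequality.} Set $e^{k+1}:=MX^{x,u^{k+1}}(T)-b$. Since $L_\rho(\cdot,\lambda^k)$ is quadratic in $u$ and its gradient vanishes at $u^{k+1}$, its exact Taylor expansion about $u^{k+1}$ yields
$$
L_\rho(\bar u,\lambda^k) = L_\rho(u^{k+1},\lambda^k) + J^{0}(\bar u - u^{k+1}) + \tfrac{\rho}{2}\,\mathbb{E}\big|MX^{0,\bar u - u^{k+1}}(T)\big|^2,
$$
with $J^0$ as in (A3). Using $M\bar X^{x,\bar u}(T)=b$ to identify $MX^{0,\bar u - u^{k+1}}(T)=-e^{k+1}$ and $L_\rho(\bar u,\lambda^k)=J(\bar u)$, and invoking the lower bound $J^0(v)\geq \delta\,\mathbb{E}\!\int_0^T\!|v|^2 dt$ from (A3), I obtain
$$
J(\bar u) \geq J(u^{k+1}) + \mathbb{E}\langle \lambda^k, e^{k+1}\rangle + \rho\,\mathbb{E}|e^{k+1}|^2 + \delta\|u^{k+1}-\bar u\|^2_{L^2_{\mathbb{F}}}.
$$
An analogous Taylor expansion of $L(\cdot,\bar\lambda)$ about its minimizer $\bar u$ (now without the $\rho$-term) yields
$$
J(u^{k+1}) + \mathbb{E}\langle \bar\lambda, e^{k+1}\rangle \geq J(\bar u) + \delta\|u^{k+1}-\bar u\|^2_{L^2_{\mathbb{F}}}.
$$
Adding these two inequalities and cancelling $J(\bar u)$ and $J(u^{k+1})$ gives the pivotal one-step estimate
$$
\mathbb{E}\langle \lambda^k - \bar\lambda,\, e^{k+1}\rangle \;\leq\; -2\delta\|u^{k+1}-\bar u\|^2_{L^2_{\mathbb{F}}} - \rho\,\mathbb{E}|e^{k+1}|^2.
$$

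\textbf{Fej\'er monotonicity and convergence.} Expanding $\lambda^{k+1}-\bar\lambda=(\lambda^k-\bar\lambda)+r^k e^{k+1}$ in $L^2_{\mathcal{F}_T}(\Omega;\mathbb{R}^\ell)$ and substituting the one-step estimate gives
$$
\|\lambda^{k+1}-\bar\lambda\|^2_{L^2_{\mathcal{F}_T}} \;\leq\; \|\lambda^k-\bar\lambda\|^2_{L^2_{\mathcal{F}_T}} - 4r^k\delta\|u^{k+1}-\bar u\|^2_{L^2_{\mathbb{F}}} - r^k(2\rho-r^k)\mathbb{E}|e^{k+1}|^2.
$$
Under the hypothesis $0<r^0\leq r^k\leq 2\rho$, the last term is non-positive and $4r^k\delta\geq 4r^0\delta>0$, so the sequence $\{\|\lambda^k-\bar\lambda\|^2_{L^2_{\mathcal{F}_T}}\}$ is non-increasing, and telescoping yields
$$
\sum_{k=0}^{\infty}\|u^{k+1}-\bar u\|^2_{L^2_{\mathbb{F}}} \;\leq\; \frac{\|\lambda^0-\bar\lambda\|^2_{L^2_{\mathcal{F}_T}}}{4r^0\delta} \;<\; \infty,
$$
hence $u^k\to\bar u$ strongly in $L^2_{\mathbb{F}}(0,T;\mathbb{R}^m)$, which is the desired conclusion.

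\textbf{Main obstacle.} The delicate point is producing the coefficient $\rho$ (rather than $\rho/2$) in front of $\mathbb{E}|e^{k+1}|^2$ in the one-step inequality: a generic use of $\delta$-strong convexity of $L_\rho(\cdot,\lambda^k)$ only gives $\rho/2$, which is precisely at the boundary of usefulness when $r^k$ approaches $2\rho$ and would not allow the full range $r^k=2\rho$ in the hypothesis. The sharper constant is recovered by using the exact quadratic Taylor expansion above: one $\tfrac{\rho}{2}\mathbb{E}|e^{k+1}|^2$ comes from expanding $(\rho/2)\mathbb{E}|e(\cdot)|^2$ around the minimizer, and a second $\tfrac{\rho}{2}\mathbb{E}|e^{k+1}|^2$ is already contained in $L_\rho(u^{k+1},\lambda^k)$ itself, the two combining to $\rho\,\mathbb{E}|e^{k+1}|^2$ on the right-hand side.
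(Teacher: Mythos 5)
Your proof is correct, and it reaches the paper's conclusion along the same overall skeleton (saddle-point multiplier from Theorem \ref{th strong dual}, a one-step estimate on $\mathbb{E}\langle\lambda^k-\bar\lambda,e^{k+1}\rangle$, then Fej\'er monotonicity with exactly the paper's deficit term $-r^k(2\rho-r^k)\mathbb{E}|e^{k+1}|^2$), but the derivation of the central inequality is genuinely different. The paper obtains it by stochastic-control machinery: it writes the first-order conditions for the subproblem \eqref{eq suanfa2} (Lemma \ref{3}) and for (CSLQ) (Lemma \ref{lemmasss}) in terms of adjoint BSDEs, applies It\^{o}'s formula with the test directions $v=\bar u-u^{k+1}$ and $v=u^{k+1}-\bar u$, and subtracts the two variational identities, arriving at the exact identity $\mathbb{E}\langle\lambda^k-\bar\lambda,MX^{0,v^{k+1}}(T)\rangle=-2J^{0}(v^{k+1})-\rho\,\mathbb{E}|MX^{0,v^{k+1}}(T)|^2$ before invoking (A3). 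You instead perform exact second-order Taylor expansions of the two quadratic functionals $L_\rho(\cdot,\lambda^k)$ and $L(\cdot,\bar\lambda)$ about their respective minimizers $u^{k+1}$ and $\bar u$ and add the resulting inequalities; this is precisely the abstract Hilbert-space ALM argument for quadratic programming with linear equality constraints that the paper explicitly acknowledges as an alternative in the introduction and deliberately avoids in favor of ``elementary techniques in stochastic control.'' Your route buys coordinate-free transparency (the splitting $\tfrac{\rho}{2}+\tfrac{\rho}{2}$ that you flag as the delicate point is, in the paper, automatic from its exact identity, whose $\rho$-term comes from the terminal weight $G+\rho M^{\top}M$ in the perturbed data) and avoids the BSDE/It\^{o} computations entirely; the paper's route keeps every object tied to the adjoint equations and Riccati-based feedback forms that make the scheme implementable, and as a byproduct yields the optimality system of Lemma \ref{lemmasss}. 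Two minor observations: your constant $4r^k\delta$ in the Fej\'er estimate is in fact the sharp one (the paper states $2r^k\delta$, apparently dropping a harmless factor of $2$ when passing from its identity, where the quadratic bracket equals $2J^{0}(v^{k+1})$, to its displayed inequality), and your invocation of Lemma \ref{lemmas} is not actually needed, since you only use feasibility of $\bar u$ and its minimality for $L(\cdot,\bar\lambda)$, both of which come directly from Theorem \ref{th strong dual}.
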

\begin{proof}
By Lemma \ref{3}, for any $k,\ u^{k+1}$ satisfies the first-order necessary condition
\begin{align*}
R(t)u^{k+1}(t)-B(t)^{\top}p_{\rho,\lambda^{k}}(t)-D(t)^{\top}q_{\rho,\lambda^{k}}(t)=0,\quad a.e.\ t\in [0,T], a.s.,
\end{align*}
where $\big(p_{\rho,\lambda^{k}}(\cdot),q_{\rho,\lambda^{k}}(\cdot)\big)$ is the solution to BSDE \eqref{eq 3.38}.  Then, by It\^{o}'s formula, for any $v\in L^{2}_{\mathbb{F}}(0,T;\mathbb{R}^m)$, we have
\begin{align*}
&\mathbb{E}\int_{0}^{T}\big\langle Q(t)X^{x,u^{k+1}}(t),X^{0,v}(t)\big\rangle+\big\langle R(t)u^{k+1}(t),v(t)\big\rangle dt\nonumber\\
&+\mathbb{E}\big\langle (G+\rho M^{\top}M)X^{x,u^{k+1}}(T),X^{0,v}(T)\big\rangle+\mathbb{E}\big\langle M^{\top}\lambda^k-\rho M^{\top}b,X^{0,v}(T)\big\rangle= 0.
\end{align*}
Especially, for $v=\bar{u}-u^{k+1}$, we have
\begin{align}\label{6}
&\mathbb{E}\int_{0}^{T}\big\langle Q(t)X^{x,u^{k+1}}(t),X^{0,\bar{u}-u^{k+1}}(t)\big\rangle+\big\langle R(t)u^{k+1}(t),\bar{u}(t)-u^{k+1}(t)\big\rangle dt\nonumber\\
&+\mathbb{E}\big\langle (G+\rho M^{\top}M)X^{x,u^{k+1}}(T),X^{0,\bar{u}-u^{k+1}}(T)\big\rangle+\mathbb{E}\big\langle M^{\top}\lambda^k-\rho M^{\top}b,X^{0,\bar{u}-u^{k+1}}(T)\big\rangle= 0.
\end{align}
Similarly, by Lemma \ref{lemmasss} and It\^{o}'s formula, we have
\begin{align}\label{7}
&\mathbb{E}\int_{0}^{T}\big\langle Q(t)\bar{X}^{x,\bar{u}}(t),X^{0,u^{k+1}-\bar{u}}(t)\big\rangle+\big\langle R(t)\bar{u}(t),u^{k+1}(t)-\bar{u}(t)\big\rangle dt\nonumber\\
&+\mathbb{E}\big\langle (G+\rho M^{\top}M)\bar{X}^{x,\bar{u}}(T),X^{0,u^{k+1}-\bar{u}}(T)\big\rangle+\mathbb{E}\big\langle M^{\top}\bar{\lambda}-\rho M^{\top}b,X^{0,u^{k+1}-\bar{u}}(T)\big\rangle= 0.
\end{align}
By \eqref{6}--\eqref{7}  and the linearity of control system \eqref{eq controlsys}, we obtain that
\begin{align*}
&\mathbb{E}\int_{0}^{T} \big\langle Q(t)X^{0,u^{k+1}-\bar{u}}(t),X^{0,u^{k+1}-\bar{u}}(t)\big\rangle+\big\langle R(t)(u^{k+1}(t)-\bar{u}(t)),u^{k+1}(t)-\bar{u}(t)\big\rangle dt\\
&+\mathbb{E}\big\langle GX^{0,u^{k+1}-\bar{u}}(T), X^{0,u^{k+1}-\bar{u}}(T)\big\rangle\\
&+\mathbb{E}\big\langle M^{\top}(\lambda^k-\bar{\lambda}),X^{0,u^{k+1}-\bar{u}}(T)\big\rangle+\rho\mathbb{E}\big\langle M^{\top}MX^{0,u^{k+1}-\bar{u}}(T),X^{0,u^{k+1}-\bar{u}}(T)\big\rangle= 0.
\end{align*}
Letting $v^{k+1}=u^{k+1}-\bar{u}$, we have
\begin{align}\label{eq 4.20+}
&\mathbb{E}\big\langle \lambda^k-\bar{\lambda},MX^{0,v^{k+1}}(T)\big\rangle\nonumber\\
&=-\mathbb{E}\int_{0}^{T} \big\langle Q(t)X^{0,v^{k+1}}(t),X^{0,v^{k+1}}(t)\big\rangle+\big\langle R(t)v^{k+1}(t),v^{k+1}(t)\big\rangle dt\nonumber\\
&\quad -\mathbb{E}\big\langle GX^{0,v^{k+1}}(T), X^{0,v^{k+1}}(T)\big\rangle-\rho\mathbb{E}\big\langle M^{\top}MX^{0,v^{k+1}}(T),X^{0,v^{k+1}}(T)\big\rangle.
\end{align}
By  \eqref{eq suanfa3}, \eqref{eq 4.20+}, condition (A3) and the fact that $M\bar{X}^{x,\bar{u}}(T)-b=0$  a.s., we have
\begin{align}\label{eq 4.20}
\mathbb{E}\big| \lambda^{k+1}-\bar{\lambda}\big|^2
&=\mathbb{E}\big| \lambda^{k}+r^k(MX^{x,u^{k+1}}(T)-b)-\bar{\lambda}\big|^2\nonumber\\
&=\mathbb{E}\big| \lambda^{k}-\bar{\lambda}\big|^2+2r^k\mathbb{E}\big\langle\lambda^{k}-\bar{\lambda}, MX^{0,v^{k+1}}(T)\big\rangle+(r^k)^2\mathbb{E}\big| MX^{0,v^{k+1}}(T)\big|^2\nonumber\\
&=\mathbb{E}\big| \lambda^{k}-\bar{\lambda}\big|^2-2r^kJ^{0}(v^{k+1})-2r^k\rho\mathbb{E}\big| MX^{0,v^{k+1}}(T)\big|^2+(r^k)^2\mathbb{E}\big| MX^{0,v^{k+1}}(T)\big|^2\nonumber\\
&\le\mathbb{E}\big| \lambda^{k}-\bar{\lambda}\big|^2-2r^k\delta\mathbb{E}\int_{0}^{T}|v^{k+1}(t)|^2\mathrm{d}t-r^k(2\rho-r^k)\mathbb{E}\big| MX^{0,v^{k+1}}(T)\big|^2.
\end{align}
This proves that the sequence $\{\|\lambda^{k+1}-\bar{\lambda}\|_{L_{\mathcal{F}_{T}}^2(\Omega;\mathbb{R}^\ell)}\}$ is decreasing and bounded below by $0$, hence it is convergent.
In addition, by \eqref{eq 4.20}, we have
\begin{align*}
0\le2r^0\delta\mathbb{E}\int_{0}^{T}|v^{k+1}(t)|^2\mathrm{d}t\le\mathbb{E}\big| \lambda^{k}-\bar{\lambda}\big|^2-\mathbb{E}\big| \lambda^{k+1}-\bar{\lambda}\big|^2.
\end{align*}
Letting $k\to +\infty$, we have
$$\| u^{k+1}-\bar{u}\|_{L^{2}_{\mathbb{F}}(0,T;\mathbb{R}^m)}^2=\| v^{k+1}\|_{L^{2}_{\mathbb{F}}(0,T;\mathbb{R}^m)}^2\to 0.$$

This completes the proof of Theorem \ref{th 4.1}.
\end{proof}

\section{The characterization of condition (A4)}\label{Sec 4}
In this section, we shall give a sufficient and necessary condition and some sufficient conditions for  condition (A4). Some basic ideas are from the fundamental controllability argumentation of \cite{Peng1994,LuZhang2021}.

In order to characterize the condition (A4), let us consider the following norm optimal control problem:
\begin{equation*}
\left\{
\begin{array}{ll}
\min &\mathbb{E}\displaystyle\int_{0}^{T}|u(t)|^2\mathrm{d}t, \\[+0.4em]
\ns
\ds		
\text{s.t. }&{u\in L^2_{\mathbb{F}}(0,T;\mathbb{R}^m)},\\[+0.4em]
&MX^{x,u}(T)-b=0, \ a.s.,\tag{NP}
\end{array}
\right.
\end{equation*}
where $X^{x,u}(\cdot)$ is a solution to the control system $\eqref{eq controlsys}$ with control $u$ and initial datum $x$.
Clearly, the problem (NP) is a special case of (CSLQ) with $Q(\cdot)\equiv0$, $G\equiv 0$ and $R(\cdot)\equiv I_{m}>0$. Furthermore, $(P(t),\Lambda(t))\equiv 0$ is the solution to its Riccati equation
\begin{equation*}
\left\{\!\!\!
\begin{array}{l}
dP(t)=-\big[P(t)A(t)+A(t)^{\top}P(t)+C(t)^{\top}P(t)C(t)+\Lambda(t)C(t)+C(t)^{\top}\Lambda(t)\\
\ns
\ds
\qquad\qquad -L(t)^{\top}K(t)^{-1}L(t)\big]dt+\Lambda(t)dW(t),
\quad  t\in [0,T],\\[+0.3em]
P(T)=0.
\end{array}
\right.
\end{equation*}
Here $L(\cdot)$ and $K(\cdot)$ are defined by \eqref{eq xishu}.

Define the Lagrangian functional of (NP) by
\begin{align*}
L(u,\lambda)\triangleq\frac{1}{2}\mathbb{E}\int_{0}^{T}|u(t)|^2\mathrm{d}t+\mathbb{E}\langle\lambda, MX^{x,u}(T)-b\rangle,\quad \forall\ u\in L^2_{\mathbb{F}}(0,T;\mathbb{R}^m),  \forall\ \lambda\in L^{2}_{\mathcal{F}_{T}}(\Omega;\mathbb{R}^\ell) .
\end{align*}
For any $\lambda\in L^{2}_{\mathcal{F}_{T}}(\Omega;\mathbb{R}^\ell)$, the equation \eqref{eq 261} reduces to
\begin{equation}\label{eq 34} 	
\left\{
\begin{array}{l}
d\varphi_{\lambda}(t)=-\Big\{
A(t)^{\top} \varphi_{\lambda}(t) +C(t)^{\top} \psi_{\lambda}(t)\Big\}dt
+\psi_{\lambda}(t) dW(t), \quad  t\in [0,T],\\[+0.5em]
\ns
\ds
\varphi_{\lambda}(T)=M^{\top}\lambda.
\end{array}
\right.
\end{equation}

By \eqref{eq 34} and It\^{o}'s formula, we have
\begin{align*}
L(u,\lambda)
&=\frac{1}{2}\mathbb{E}\int_{0}^{T}|u(t)|^2\mathrm{d}t+\mathbb{E}\langle \lambda, MX^{x,u}(T)-b\rangle \\
&=\frac{1}{2}\mathbb{E}\int_{0}^{T}|u(t)|^2\mathrm{d}t+\langle \varphi_{\lambda}(0), x\rangle-\mathbb{E}\langle\lambda,b\rangle+\mathbb{E}\int_{0}^{T}\langle B(t)^{\top}\varphi_{\lambda}(t)+D(t)^{\top}\psi_{\lambda}(t), u(t)\rangle dt\\
&=\frac{1}{2}\mathbb{E}\int_{0}^{T}|u+ B(t)^{\top}\varphi_{\lambda}(t)+D(t)^{\top}\psi_{\lambda}(t)|^2\mathrm{d}t+\langle \varphi_{\lambda}(0), x\rangle\\
&\quad-\mathbb{E}\langle\lambda,b\rangle-\frac{1}{2}\mathbb{E}\int_{0}^{T}| B(t)^{\top}\varphi_{\lambda}(t)+D(t)^{\top}\psi_{\lambda}(t)|^2 dt.
\end{align*}
Then, the Lagrangian dual functional for (NP) is
\begin{equation}\label{eq 351}
\begin{split}
d(\lambda)
&\triangleq \inf_{u\in L^{2}_{\mathbb{F}}(0,T;\mathbb{R}^m)}L(u,\lambda)=-\frac{1}{2}\mathbb{E}\int_{0}^{T}| B(t)^{\top}\varphi_{\lambda}(t)+D(t)^{\top}\psi_{\lambda}(t)|^2 dt+\langle \varphi_{\lambda}(0), x\rangle-\mathbb{E}\langle\lambda,b\rangle.
\end{split}
\end{equation}
and the minimal solution to the optimization problem in \eqref{eq 351} is
$$u_{\lambda}(t)=- B(t)^{\top}\varphi_{\lambda}(t)-D(t)^{\top}\psi_{\lambda}(t),\qquad a.e.\ t\in[0,T], \ a.s.$$
Define the dual problem of (NP) as follows:
\begin{equation*}
\left\{
\begin{array}{ll}
\max & d(\lambda)=-\dfrac{1}{2}\mathbb{E}\displaystyle\int_{0}^{T}|B(t)^{\top}\varphi_{\lambda}(t)+D(t)^{\top}\psi_{\lambda}(t)|^2\mathrm{d}t+\langle \varphi_{\lambda}(0), x\rangle-\mathbb{E}\langle\lambda,b\rangle,\\[2mm]
\ns
\ds
\text{s.t. } &\lambda\in L_{\mathcal{F}_{T}}^2(\Omega;\mathbb{R}^\ell).\tag{ND}
\end{array}
\right.
\end{equation*}

We have the following result.
\begin{proposition}\label{prop 5.1}
If $\bar{\lambda}$ is the optimal solution to (ND), then
\begin{equation}\label{bar u for NP} \bar{u}(t)=-B(t)^{\top}\varphi_{\bar{\lambda}}(t)-D(t)^{\top}\psi_{\bar \lambda}(t) \qquad a.e.\ t\in[0,T], \ a.s.
 \end{equation}
is the optimal solution of (NP), where $(\varphi_{\bar{\lambda}},\psi_{\bar \lambda})$ is the solution to (\ref{eq 34}).
\end{proposition}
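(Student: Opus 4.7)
The plan is to realize (NP) as the special case of (CSLQ) with $Q(\cdot)\equiv 0$, $G\equiv 0$, and $R(\cdot)\equiv I_{m}$, and then to read off the conclusion from Theorem \ref{th bar lambda}. First, I would verify the hypotheses (A1)--(A4) for this instance: (A1), (A2), (A4) are inherited from the standing assumptions of the section, and (A3) holds trivially with $\delta=1/2$ since $J^{0}(v)=\tfrac{1}{2}\mathbb{E}\int_{0}^{T}|v(t)|^{2}dt$.

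The key observation is that, for this specialization, the Riccati equation \eqref{eq Riccati} admits the trivial solution $(P(\cdot),\Lambda(\cdot))\equiv(0,0)$. Consequently, the coefficients in \eqref{eq xishu} reduce to $L(\cdot)\equiv 0$ and $K(\cdot)\equiv I_{m}$, the regularity condition \eqref{eq zhnegzexing} is automatic, the BSDE \eqref{eq 261} coincides exactly with \eqref{eq 34}, and the forward equation \eqref{eq Pi} collapses back to \eqref{eq controlsys}. Substituting $L\equiv 0$ and $K\equiv I_{m}$ into the optimal feedback formula of Theorem \ref{th bar lambda} then yields precisely \eqref{bar u for NP}.

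Equivalently, one can argue directly, mirroring the proof of Theorem \ref{th bar lambda}. The completed-square identity for $L(u,\lambda)$ carried out in the derivation of \eqref{eq 351} shows that $u_{\lambda}(t)=-B(t)^{\top}\varphi_{\lambda}(t)-D(t)^{\top}\psi_{\lambda}(t)$ is the unique minimizer of $L(\cdot,\lambda)$ over $L_{\mathbb{F}}^{2}(0,T;\mathbb{R}^{m})$. By Theorem \ref{th strong dual}, if $\bar{\lambda}$ solves (ND) and $\bar{u}^{\ast}$ denotes the unique optimal control of (NP), then $(\bar{u}^{\ast},\bar{\lambda})$ is a saddle point of $L$, so $\bar{u}^{\ast}$ minimizes $L(\cdot,\bar{\lambda})$, whence $\bar{u}^{\ast}=u_{\bar{\lambda}}$. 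Feasibility $M\bar{X}^{x,\bar u}(T)-b=0$ a.s.\ is then inherited from the first-order necessary condition for $\bar{\lambda}$, obtained by equating the Gateaux derivative of $d(\cdot)$ at $\bar{\lambda}$ to zero, exactly as in the proof of Theorem \ref{th bar lambda}.

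I do not anticipate any substantive obstacle: the hard work has already been done in Section \ref{sec3}, and only care in verifying that each hypothesis invoked is inherited by (NP) is required, which is routine.
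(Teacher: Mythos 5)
There is a genuine gap: both of your routes import condition (A4), which is neither a hypothesis of Proposition \ref{prop 5.1} nor a standing assumption of Section \ref{Sec 4}. Your claim that ``(A4) is inherited from the standing assumptions of the section'' is false --- Section \ref{Sec 4} assumes only (A1), because its entire purpose is to \emph{characterize} (A4). Proposition \ref{prop 5.1} is precisely the engine of the sufficiency half of Theorem \ref{th observess estm}: there, the coercivity inequality \eqref{eq 381} yields an optimal dual solution $\bar{\lambda}_{\alpha}$ for an arbitrary target $\alpha$, and the proposition is then invoked to produce a control steering $MX^{x,u}(T)$ to $\alpha$, which is how surjectivity is \emph{proved}. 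If the proof of Proposition \ref{prop 5.1} assumed (A4) --- as your first route does by invoking Theorem \ref{th bar lambda}, and as your second route does by invoking Theorem \ref{th strong dual} and, worse, by positing ``the unique optimal control $\bar{u}^{\ast}$ of (NP)'' (whose existence presupposes the feasible set is nonempty, i.e., essentially what is at stake) --- the characterization theorem would be circular.

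The paper's proof avoids this entirely and uses only (A1) plus the hypothesis that $\bar{\lambda}$ solves (ND): the first-order condition $\langle \nabla d(\bar{\lambda}),\mu\rangle=0$ for all $\mu$, translated via It\^{o}'s formula applied to $\langle \varphi_{\mu},\bar{X}^{x,\bar{u}}\rangle$, gives $\mathbb{E}\langle M\bar{X}^{x,\bar{u}}(T)-b,\mu\rangle=0$ for every $\mu$, so feasibility of $\bar{u}$ is a \emph{conclusion}, not an assumption; then taking $\mu=\bar{\lambda}$ and using It\^{o}'s formula once more, Cauchy--Schwarz yields $\mathbb{E}\int_{0}^{T}|\bar{u}|^{2}\mathrm{d}t\le \big[\mathbb{E}\int_{0}^{T}|u|^{2}\mathrm{d}t\big]^{\frac{1}{2}}\big[\mathbb{E}\int_{0}^{T}|\bar{u}|^{2}\mathrm{d}t\big]^{\frac{1}{2}}$ for every feasible $u$, which is optimality. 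Your second route is close to salvageable: you already observe (correctly, and without (A4)) that the completion of squares in \eqref{eq 351} shows $u_{\bar{\lambda}}=-B^{\top}\varphi_{\bar{\lambda}}-D^{\top}\psi_{\bar{\lambda}}$ is the unique minimizer of $L(\cdot,\bar{\lambda})$, and that the first-order condition for $\bar{\lambda}$ gives feasibility of $u_{\bar{\lambda}}$. Combining just these two facts finishes the proof directly: for any feasible $u$, $\tfrac{1}{2}\mathbb{E}\int_{0}^{T}|u_{\bar{\lambda}}|^{2}\mathrm{d}t=L(u_{\bar{\lambda}},\bar{\lambda})\le L(u,\bar{\lambda})=\tfrac{1}{2}\mathbb{E}\int_{0}^{T}|u|^{2}\mathrm{d}t$. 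Delete the detour through strong duality, the saddle point, and the presupposed $\bar{u}^{\ast}$, and the argument becomes both correct and usable where the paper needs it.
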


\begin{proof}
Let $\bar \lambda, \mu\in L_{\mathcal{F}_{T}}^2(\Omega;\mathbb{R}^\ell)$, $(\varphi_{\bar{\lambda}},\psi_{\bar \lambda})$ and $(\varphi_{\mu},\psi_{\mu})$ be the solutions to $\eqref{eq 34}$ with final datum $M^{\top}\bar\lambda$ and $M^{\top}\mu$, respectively. If $\bar{\lambda}$ is an optimal solution to (ND), then, for any $\mu\in L_{\mathcal{F}_{T}}^2(\Omega;\mathbb{R}^\ell)$,
\begin{align}\label{eq 36}
0&=\langle\nabla d(\bar \lambda),\mu\rangle\nonumber\\
&=\langle\varphi_{\mu}(0),x\rangle-\mathbb{E}\langle b,\mu\rangle-\int_{0}^{T}\langle B(t)^{\top}\varphi_{\bar{\lambda}}(t)+D(t)^{\top}\psi_{\bar \lambda}(t), B(t)^{\top}\varphi_{\mu}(t)+D(t)^{\top}\psi_{  \mu}(t)\rangle\mathrm{d}t.
\end{align}
Let $\bar{X}^{x,\bar{u}}$ be the solution to the controlled system \eqref{eq controlsys} with   control $\bar{u}$ defined by \eqref{bar u for NP} and initial datum $x$. By \eqref{bar u for NP},\eqref{eq 36} and It\^{o}'s formula, for any $\mu\in L_{\mathcal{F}_{T}}^2(\Omega;\mathbb{R}^\ell)$,
\begin{equation}\label{eq 37}
\begin{split}
&\mathbb{E}\langle M\bar{X}^{x,\bar{u}}(T)-b,\mu\rangle\\
&=\mathbb{E} \langle M^{\top}\mu, \bar{X}^{x,\bar{u}}(T)\rangle-\mathbb{E}\langle b,\mu\rangle\\
&=\langle \varphi_{\mu}(0),x\rangle+\mathbb{E}\int_{0}^{T}\langle\bar{u}(t),  B(t)^{\top}\varphi_{\mu}(t)+D(t)^{\top}\psi_{  \mu}(t)\rangle\mathrm{d}t-\mathbb{E}\langle b,\mu\rangle\\
&=0.
\end{split}
\end{equation}
Due to the arbitrariness of $\mu$, we obtain
$M\bar{X}^{x,\bar{u}}(T)-b=0,\ a.s.$
This proves that the $\bar{u}$ defined by \eqref{bar u for NP} is a feasible control.

Next, we prove the optimality of $\bar u$. Replacing $\mu$ by $\bar\lambda$ in \eqref{eq 37}, we obtain that
\begin{equation}\label{eq 45}
0=\langle \varphi_{\bar \lambda}(0),x\rangle-\mathbb{E}\int_{0}^{T}|\bar{u}(t)|^2\mathrm{d}t-\mathbb{E}\langle b,\bar\lambda\rangle.
\end{equation}
For any $u\in L^{2}_{\mathbb{F}}(0,T;\mathbb{R}^m)$ with the corresponding state $X^{x,u}$ such that $MX^{x,u}(T)-b=0,\ a.s.$,  by It\^{o}'s formula,
\begin{equation}\label{eq 44}
\begin{split}
&\mathbb{E}\langle X^{x,u}(T), M^{\top}\bar\lambda\rangle=\langle \varphi_{\bar \lambda}(0),x\rangle+\mathbb{E}\int_{0}^{T}\langle u(t),  B(t)^{\top}\varphi_{\bar \lambda}(t)+D(t)^{\top}\psi_{  \bar \lambda}(t)\rangle\mathrm{d}t.
\end{split}
\end{equation}
Combining \eqref{eq 44} with \eqref{eq 45}, we obtain that
\begin{equation*}
\begin{split}
\mathbb{E}\int_{0}^{T}|\bar{u}(t)|^2\mathrm{d}t&=\mathbb{E}\langle MX^{x,u}(T)-b, \bar\lambda\rangle -\mathbb{E}\int_{0}^{T}\langle u(t),  B(t)^{\top}\varphi_{\bar \lambda}(t)+D(t)^{\top}\psi_{  \bar \lambda}(t)\rangle\mathrm{d}t\\
&=-\mathbb{E}\int_{0}^{T}\langle u(t),  B(t)^{\top}\varphi_{\bar \lambda}(t)+D(t)^{\top}\psi_{  \bar \lambda}(t)\rangle\mathrm{d}t\\
&\le \Big[\mathbb{E}\int_{0}^{T}| u(t)|^2\mathrm{d}t\Big]^{\frac{1}{2}} \Big[\mathbb{E}\int_{0}^{T}| \bar u(t)|^2\mathrm{d}t\Big]^{\frac{1}{2}}.
\end{split}
\end{equation*}
This proves the optimality of $\bar u$.
\end{proof}
The following theorem gives a necessary and sufficient condition for $u\mapsto MX^{x,u}(T)$ to be a surjection.
\begin{theorem}\label{th observess estm}
Suppose that (A1) holds true. Then, $u\mapsto MX^{x,u}(T)$ is a surjection if and only if there is $c>0$ such that
\begin{equation}\label{eq 381}
\mathbb{E}\int_{0}^{T}| B(t)^{\top}\varphi_{\lambda}(t)+D(t)^{\top}\psi_{\lambda}(t)|^2\mathrm{d}t\geq c\mathbb{E}|\lambda|^2, \quad \forall\ \lambda\in L_{\mathscr{F}_{T}}^2(\Omega;\mathbb{R}^\ell),
\end{equation}
where $(\varphi_{\lambda}(\cdot),\psi_{\lambda}(\cdot))$ is an adapted solution to (\ref{eq 34}).
\end{theorem}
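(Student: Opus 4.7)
The plan is to recognize this as the classical duality between surjectivity (controllability) of a bounded linear operator and an observability-type coercivity inequality for its adjoint, tailored to the present stochastic setting.

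First I would reduce to the case $x=0$. Since $MX^{x,u}(T)=MX^{x,0}(T)+MX^{0,u}(T)$, the mapping $u\mapsto MX^{x,u}(T)$ is surjective onto $L^2_{\mathcal{F}_{T}}(\Omega;\mathbb{R}^{\ell})$ if and only if the linear mapping
$$\Gamma: L^{2}_{\mathbb{F}}(0,T;\mathbb{R}^m)\to L^2_{\mathcal{F}_{T}}(\Omega;\mathbb{R}^{\ell}),\qquad \Gamma u \triangleq MX^{0,u}(T)$$
is surjective. Standard estimates for the state equation under (A1) give the boundedness of $\Gamma$.

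Next I would compute the adjoint $\Gamma^{\ast}$. For any $\lambda\in L^2_{\mathcal{F}_{T}}(\Omega;\mathbb{R}^{\ell})$, let $(\varphi_{\lambda},\psi_{\lambda})$ be the adapted solution to the BSDE \eqref{eq 34}. Applying It\^{o}'s formula to $\langle X^{0,u}(\cdot),\varphi_{\lambda}(\cdot)\rangle$ on $[0,T]$, exactly as in the computation already used in the proof of Proposition \ref{prop 5.1}, one obtains
$$\langle \Gamma u,\lambda\rangle_{L^2_{\mathcal{F}_{T}}}=\mathbb{E}\langle X^{0,u}(T),M^{\top}\lambda\rangle=\mathbb{E}\int_{0}^{T}\bigl\langle u(t),\,B(t)^{\top}\varphi_{\lambda}(t)+D(t)^{\top}\psi_{\lambda}(t)\bigr\rangle\mathrm{d}t,$$
so that $\Gamma^{\ast}\lambda=B^{\top}\varphi_{\lambda}+D^{\top}\psi_{\lambda}$ in $L^{2}_{\mathbb{F}}(0,T;\mathbb{R}^m)$. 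Consequently, the observability inequality \eqref{eq 381} is exactly the coercivity estimate
$$\|\Gamma^{\ast}\lambda\|_{L^{2}_{\mathbb{F}}(0,T;\mathbb{R}^m)}^{2}\ge c\,\|\lambda\|_{L^2_{\mathcal{F}_{T}}(\Omega;\mathbb{R}^{\ell})}^{2},\qquad \forall\ \lambda\in L^2_{\mathcal{F}_{T}}(\Omega;\mathbb{R}^{\ell}).$$

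Finally I would invoke the classical closed range / open mapping duality between Hilbert spaces. If $\Gamma$ is surjective, then by the open mapping theorem its adjoint $\Gamma^{\ast}$ is bounded below, which yields \eqref{eq 381}. Conversely, if \eqref{eq 381} holds, then $\Gamma^{\ast}$ is injective with closed range; by the closed range theorem $\Gamma$ also has closed range, and the injectivity of $\Gamma^{\ast}$ means $(\mathrm{Ran}\,\Gamma)^{\perp}=\ker\Gamma^{\ast}=\{0\}$, so $\Gamma$ is surjective. The main (modest) obstacle is simply the correct identification of $\Gamma^{\ast}$ via the It\^{o} product rule and the careful verification that $\Gamma^{\ast}$ maps into $L^{2}_{\mathbb{F}}(0,T;\mathbb{R}^m)$, which follows from well-posedness and standard $L^{2}$ estimates for the linear BSDE \eqref{eq 34} under assumption (A1); once these are in place, the theorem is a direct application of Hilbert-space duality.
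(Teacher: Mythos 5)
Your proof is correct, but it follows a genuinely different route from the paper's. You package the whole statement as the classical Hilbert-space duality ``$\Gamma$ surjective $\iff$ $\Gamma^{\ast}$ bounded below'': the identification $\Gamma^{\ast}\lambda=B^{\top}\varphi_{\lambda}+D^{\top}\psi_{\lambda}$ via It\^{o}'s formula is exactly the pairing the paper also uses, and then one direction is the open mapping theorem while the other is the closed range theorem together with $(\mathrm{Ran}\,\Gamma)^{\perp}=\ker\Gamma^{\ast}$. The paper instead argues both directions by hand: for sufficiency, it fixes an arbitrary target $\alpha\in L^{2}_{\mathcal{F}_{T}}(\Omega;\mathbb{R}^{\ell})$, observes that \eqref{eq 381} makes the concave dual functional $\hat{d}_{\alpha}$ coercive, so the dual problem attains a maximizer $\bar{\lambda}_{\alpha}$, and then the first-order condition (as in Proposition \ref{prop 5.1}) yields an \emph{explicit minimum-norm} control $\bar{u}_{\alpha}=-B^{\top}\varphi_{\bar{\lambda}_{\alpha}}-D^{\top}\psi_{\bar{\lambda}_{\alpha}}$ steering $MX^{x,\bar u_{\alpha}}(T)$ to $\alpha$; for necessity, it rescales a violating sequence to get $\widehat{\lambda}_{n}$ with $\mathbb{E}|\widehat{\lambda}_{n}|^{2}\to\infty$ while $\mathbb{E}\int_{0}^{T}|B^{\top}\varphi_{\widehat{\lambda}_{n}}+D^{\top}\psi_{\widehat{\lambda}_{n}}|^{2}\mathrm{d}t\to 0$, uses the same It\^{o} duality pairing against controls hitting arbitrary targets to conclude $\widehat{\lambda}_{n}\rightharpoonup 0$ weakly, and derives a contradiction from the boundedness of weakly convergent sequences. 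Your argument is shorter and makes the underlying functional-analytic mechanism transparent (and is reusable verbatim for any bounded linear constraint map), at the cost of invoking the closed range theorem as a black box; the paper's argument is more self-contained and constructive, producing the norm-optimal control that solves (NP) along the way, which is precisely what the surrounding section is about. One small point worth making explicit in your write-up: the reduction to $x=0$ and the boundedness of $\Gamma$ (hence the existence of a bounded adjoint) both rest on the standard $L^{2}$ well-posedness estimates for \eqref{eq controlsys} and the linear BSDE \eqref{eq 34} under (A1), which you correctly flag but should cite or verify when writing this in full.
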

\begin{proof}
Let us fix arbitrarily $\alpha\in L_{\mathcal{F}_{T}}^2(\Omega;\mathbb{R}^\ell)$ and define
\begin{equation*}
\hat{d}_{\alpha}(\lambda)\triangleq -\frac{1}{2}\mathbb{E}\int_{0}^{T}|B(t)^{\top}\varphi_{\lambda}(t)+D(t)^{\top}\psi_{\lambda}(t)|^2\mathrm{d}t+\langle \varphi_{\lambda}(0), x\rangle-\mathbb{E}\langle\lambda,\alpha\rangle.
\end{equation*}
If inequality \eqref{eq 381} holds, then $\hat{d}_{\alpha}(\lambda)$ is coercive. Meanwhile, $\hat{d}_{\alpha}(\lambda)$ is a continuous concave functional. Thus (ND) has an optimal solution  $\bar{\lambda}_{\alpha}$.
Similar to Proposition \ref{prop 5.1}, we conclude that
$$\bar u_{\alpha}(t)=-B(t)^{\top}\varphi_{\bar{\lambda}_{\alpha}}(t)+D(t)^{\top}\psi_{\bar{\lambda}_{\alpha}}(t), \qquad a.e.\ t\in[0,T], \ a.s.$$
is a minimal norm control satisfying
$$M\bar{X}^{x,\bar u_{\alpha}}(T)=\alpha,\quad a.s.$$
This proves the sufficiency.

Next, let us prove the necessity.
Suppose by contradiction that $u\mapsto MX^{x,u}(T)$ is surjective, but \eqref{eq 381} does not hold true. Then, there is $\{\lambda_{n}\}\subset L_{\mathcal{F}_{T}}^2(\Omega;\mathbb{R}^\ell)$ such that
\begin{align*}
\mathbb{E}\int_{0}^{T}\big|B(t)^{\top}\varphi_{\lambda_{n}}(t)+D(t)^{\top}\psi_{\lambda_{n}}(t)\big|^2\mathrm{d}t<\frac{\mathbb{E}|\lambda_{n}|^2}{n^2}.
\end{align*}
Set $\widehat{\lambda}_{n}=\frac{\sqrt{n}\lambda_{n}}{\big[\mathbb{E}|\lambda_{n}|^2\big]^{\frac{1}{2}}}$. Then $\mathbb{E}|\widehat{\lambda}_{n}|^2\to \infty$  and
\begin{align*}
\mathbb{E}\int_{0}^{T}\big|B(t)^{\top}\varphi_{\widehat{\lambda}_{n}}(t)+D(t)^{\top}\psi_{\widehat{\lambda}_{n}}(t)\big|^2\mathrm{d}t<\frac{n}{\mathbb{E}|\lambda_{n}|^2}\cdot\frac{\mathbb{E}|\lambda_{n}|^2}{n^2}=\frac{1}{n}\to 0.
\end{align*}
Since $u\mapsto MX^{x,u}(T)$ is surjective, for any $\alpha\in L_{\mathcal{F}_{T}}^2(\Omega;\mathbb{R}^\ell)$, there exists $u\in L^2_{\mathbb{F}}(0,T;\mathbb{R}^m)$ such that
\begin{equation*}
0=MX^{x,u}(T)-\alpha=MX^{0,u}(T)+MX^{x,0}(T)-\alpha.
\end{equation*}
Similarly to \eqref{eq 44}, we have
\begin{align*}
\mathbb{E}\langle \widehat{\lambda}_{n},\alpha-MX^{x,0}(T)\rangle&=\mathbb{E}\langle \widehat{\lambda}_{n},MX^{0,u}(T)\rangle\\[2mm]
&=\mathbb{E}\langle M^{\top}\widehat{\lambda}_{n},X^{0,u}(T)\rangle\\[2mm]
&=\mathbb{E}\int_{0}^{T}\langle u(t),B(t)^{\top}\varphi_{\widehat{\lambda}_{n}}(t)+D(t)^{\top}\psi_{\widehat{\lambda}_{n}}(t)\rangle\mathrm{d}t\\[2mm]
&\to 0\quad (n\to \infty).
\end{align*}
By the arbitrariness of $\alpha$, we obtain that $\widehat{\lambda}_{n}$ converges weakly to $0$. This implies that $\{\widehat{\lambda}_{n}\}$ is bounded, which contradicts to $\mathbb{E}|\widehat{\lambda}_{n}|^2\to \infty$ as $n\to \infty$. This proves \eqref{eq 381}.
\end{proof}

In the rest of this section, let us  discuss the special case that $A, B, C, D$ and $M$ are deterministic matrices.

\begin{lemma}\label{t}
Suppose that condition (A1) holds true. If $A, B, C, D$ and $M$ are deterministic matrices, then  the mapping $u\mapsto MX^{x,u}(T)$ is surjective only if $m\ge\ell$ and $\operatorname{Rank}(MD)= \ell$.
\end{lemma}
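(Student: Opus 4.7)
The plan is to apply the observability characterization of Theorem \ref{th observess estm}: surjectivity of $u\mapsto MX^{x,u}(T)$ is equivalent to the estimate \eqref{eq 381}, so to establish necessity of $\operatorname{Rank}(MD)=\ell$ it suffices to exhibit, whenever $\operatorname{Rank}(MD)<\ell$, a sequence $\{\lambda_\epsilon\}\subset L^2_{\mathcal{F}_T}(\Omega;\mathbb{R}^\ell)$ along which \eqref{eq 381} breaks down. Once $\operatorname{Rank}(MD)=\ell$ is established, $m\geq\ell$ follows automatically since $MD$ is of size $\ell\times m$.

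Concretely, if $\operatorname{Rank}(MD)<\ell$ then $D^{\top}M^{\top}:\mathbb{R}^\ell\to\mathbb{R}^m$ fails to be injective, so I may pick $v\in\mathbb{R}^\ell$, $v\neq 0$, with $D^{\top}M^{\top}v=0$. I then test \eqref{eq 381} against
\begin{equation*}
\lambda_\epsilon:=\frac{W(T)-W(T-\epsilon)}{\sqrt{\epsilon}}\,v,\qquad \epsilon\in(0,T),
\end{equation*}
which is $\mathcal{F}_T$-measurable and satisfies $\mathbb{E}|\lambda_\epsilon|^2=|v|^2$ independently of $\epsilon$. Writing $\xi_\epsilon:=(W(T)-W(T-\epsilon))/\sqrt{\epsilon}$, the target is to prove
\begin{equation*}
\mathbb{E}\int_0^T\bigl|B(t)^{\top}\varphi_{\lambda_\epsilon}(t)+D(t)^{\top}\psi_{\lambda_\epsilon}(t)\bigr|^2\,\mathrm{d}t\longrightarrow 0 \quad\text{as }\epsilon\to 0^+,
\end{equation*}
which directly contradicts \eqref{eq 381} and forces $\operatorname{Rank}(MD)=\ell$.

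To prove this convergence, I first exploit that $\lambda_\epsilon$ is centered and independent of $\mathcal{F}_{T-\epsilon}$ together with the deterministic coefficients of \eqref{eq 34} to argue that $\varphi_{\lambda_\epsilon}$ is deterministic and $\psi_{\lambda_\epsilon}\equiv 0$ on $[0,T-\epsilon]$; on that interval $\varphi_{\lambda_\epsilon}$ solves the ODE $\dot\varphi=-A^{\top}\varphi$ with terminal value $\varphi_{\lambda_\epsilon}(T-\epsilon)$. Taking expectations in the BSDE integrated over $[T-\epsilon,T]$ and applying Cauchy--Schwarz together with the standard a priori bound $\mathbb{E}\sup_t|\varphi_{\lambda_\epsilon}|^2+\mathbb{E}\int_0^T|\psi_{\lambda_\epsilon}|^2\,\mathrm{d}t\leq C|M^{\top}v|^2$ yields $|\varphi_{\lambda_\epsilon}(T-\epsilon)|=O(\sqrt{\epsilon})$, whence $\mathbb{E}\int_0^T|B^{\top}\varphi_{\lambda_\epsilon}|^2\,\mathrm{d}t=O(\epsilon)$. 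For the $D^{\top}\psi_{\lambda_\epsilon}$ contribution I integrate the BSDE from $T-\epsilon$ to $T$ and apply $D^{\top}$, obtaining
\begin{equation*}
\int_{T-\epsilon}^T D^{\top}\psi_{\lambda_\epsilon}\,\mathrm{d}W=D^{\top}M^{\top}v\,\xi_\epsilon-D^{\top}\varphi_{\lambda_\epsilon}(T-\epsilon)+\int_{T-\epsilon}^T D^{\top}A^{\top}\varphi_{\lambda_\epsilon}\,\mathrm{d}s+\int_{T-\epsilon}^T D^{\top}C^{\top}\psi_{\lambda_\epsilon}\,\mathrm{d}s;
\end{equation*}
the leading $O(1)$ term vanishes by the choice $D^{\top}M^{\top}v=0$, and each remaining piece is $O(\sqrt{\epsilon})$ in $L^2(\Omega)$, so Itô's isometry produces $\mathbb{E}\int_{T-\epsilon}^T|D^{\top}\psi_{\lambda_\epsilon}|^2\,\mathrm{d}t=O(\epsilon)$.

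The main obstacle is this precise asymptotic estimate on $\mathbb{E}\int_{T-\epsilon}^T|D^{\top}\psi_{\lambda_\epsilon}|^2\,\mathrm{d}t$: the a priori bounds only give $\mathbb{E}\int_0^T|\psi_{\lambda_\epsilon}|^2\,\mathrm{d}t=O(1)$ uniformly, so $\psi_{\lambda_\epsilon}$ itself is not small, and one must rely on the algebraic identity $D^{\top}M^{\top}v=0$ to annihilate the leading terminal contribution while only sub-leading remainders survive. Once that cancellation is exploited, the remaining estimates are routine BSDE bounds, and the lemma follows.
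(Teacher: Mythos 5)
Your proof is correct, but it takes a genuinely different route from the paper, which in fact gives no argument at all for Lemma \ref{t}: it defers to \cite[Proposition 6.3]{LuZhang2021}, a direct primal-side controllability argument (roughly: if $\operatorname{Rank}(MD)<\ell$, pick $v$ with $D^{\top}M^{\top}v=0$; then the martingale part of $\langle v,MX^{x,u}\rangle$ contains no control, and terminal data concentrating Brownian oscillation near $T$ cannot be reached, the admissible controls being uniformly bounded via the open mapping theorem). You instead work on the dual side, feeding the concentrated multipliers $\lambda_\epsilon=\xi_\epsilon v$ into the observability inequality \eqref{eq 381} of Theorem \ref{th observess estm} and showing its left-hand side is $O(\epsilon)$ while $\mathbb{E}|\lambda_\epsilon|^2=|v|^2>0$ is fixed. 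I checked your estimates and they all hold: the standard BSDE bound gives $\mathbb{E}\sup_t|\varphi_{\lambda_\epsilon}(t)|^2+\mathbb{E}\int_0^T|\psi_{\lambda_\epsilon}(t)|^2\,\mathrm{d}t\le C|M^{\top}v|^2$ uniformly in $\epsilon$; since $\mathbb{E}\,\varphi_{\lambda_\epsilon}(T)=0$, integrating \eqref{eq 34} over $[T-\epsilon,T]$ and taking expectations indeed yields $|\varphi_{\lambda_\epsilon}(T-\epsilon)|=O(\sqrt{\epsilon})$; and the It\^{o}-isometry step works precisely because $D^{\top}\varphi_{\lambda_\epsilon}(T)=\xi_\epsilon D^{\top}M^{\top}v=0$ annihilates the only $O(1)$ term, leaving $\mathbb{E}\int_{T-\epsilon}^{T}|D^{\top}\psi_{\lambda_\epsilon}|^2\,\mathrm{d}t=O(\epsilon)$. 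The one spot stated tersely is the claim that $\varphi_{\lambda_\epsilon}$ is deterministic and $\psi_{\lambda_\epsilon}\equiv 0$ on $[0,T-\epsilon]$; to make it airtight, solve \eqref{eq 34} on $[T-\epsilon,T]$ within the filtration generated by the increments $W(s)-W(T-\epsilon)$, $s\in[T-\epsilon,T]$, and invoke uniqueness: then $\varphi_{\lambda_\epsilon}(T-\epsilon)$ is measurable with respect to both this $\sigma$-algebra and the independent $\sigma$-algebra $\mathcal{F}_{T-\epsilon}$, hence a.s.\ constant, after which the pair consisting of the deterministic ODE solution and $\psi\equiv 0$ solves the BSDE on $[0,T-\epsilon]$ and uniqueness gives the claim (your argument also silently covers the degenerate case $M^{\top}v=0$, where everything vanishes). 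As for what each approach buys: your dual argument is entirely self-contained within the paper, reusing Theorem \ref{th observess estm} just proved, and it quantifies exactly how the observability constant degenerates under rank deficiency; the cited primal argument avoids the BSDE computations but requires the open-mapping/uniform-boundedness step on the control-to-state map. Your closing observation that $\operatorname{Rank}(MD)=\ell$ forces $m\ge\ell$ because $MD\in\mathbb{R}^{\ell\times m}$ is exactly right.
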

\begin{proof}
The proof is similar to that of \cite[Proposition 6.3]{LuZhang2021}, so we omit it.
\end{proof}

By Lemma \ref{t}, there are $K_{1}\in\mathbb{R}^{m\times m}, K_{2}\in\mathbb{R}^{m\times n}$ such that
\begin{align*}
    MDK_{1}=
\begin{pmatrix}
I_{\ell},0
\end{pmatrix},\qquad
MDK_{2}=-MC.
\end{align*}
Fix arbitrarily $z\in L^2_{\mathbb{F}}(0,T;\mathbb{R}^n)$, $v\in L^2_{\mathbb{F}}(0,T;\mathbb{R}^{m-\ell})$ and define
\begin{align}\label{control}
    u=K_{1}
\begin{pmatrix}
 Mz\\
v
\end{pmatrix}
+K_{2}X^{x,z,v}.
\end{align}
Substituting \eqref{control}   into the control system \eqref{eq controlsys}, we have
\begin{align*}
d MX^{x,z,v}(t) &=\Bigg[MAX^{x,z,v}(t)+MB\bigg(K_{1}
\begin{pmatrix}
 Mz(t)\\
v(t)
\end{pmatrix}
+K_{2}X^{x,z,v}(t)\bigg)\Bigg]dt\\
&\qquad+\Bigg[MCX^{x,z,v}(t)+MD\bigg(K_{1}
\begin{pmatrix}
 Mz(t)\\
v(t)
\end{pmatrix}
+K_{2}X^{x,z,v}(t)\bigg)\Bigg]dW(t)\\
&=\Bigg[M(A+BK_{2})X^{x,z,v}(t)+MBK_{1}\begin{pmatrix}
 Mz(t)\\
v(t)
\end{pmatrix}\Bigg]\mathrm{d}t+Mz(t) dW(t),\quad t\in[0,T].
\end{align*}
Setting $A_{1}=A+BK_{2}$ and letting $A_{2},\ B_{1}$ be the matrices such that
\begin{align*}
BK_{1}
\begin{pmatrix}
 Mz(t)\\
v(t)
\end{pmatrix}
=A_{2}z(t)+B_{1}v(t),
\end{align*}
we have
\begin{equation}\label{eq controlsy}
\left\{
\begin{array}{l}	\mathrm{d} MX^{x,z,v}(t)=\big(MA_{1}X^{x,z,v}(t)+MA_{2}z(t)+MB_{1}v(t)\big)\mathrm{d}t+Mz(t)\mathrm{d}W(t), \quad t\in[0,T]\\
\ns
\ds
MX(0)=Mx.
\end{array}
\right.
\end{equation}
From \eqref{eq controlsy} we obtain that $u\mapsto MX^{x,u}(T)$ is surjective if $(z,v)\mapsto MX^{x,z,v}(T)$ is surjective.

Consider the backward stochastic control system
\begin{equation}\label{eq backward}
\left\{
\begin{array}{l}	\mathrm{d} Y(t)=\big(A_{1}Y(t)+A_{2}z(t)+B_{1}v(t)\big)\mathrm{d}t+z(t)\mathrm{d}W(t), \quad t\in[0,T]\\
\ns
\ds
Y(T)=\eta_{T}.
\end{array}
\right.
\end{equation}
Clearly, $(z,v)\mapsto MX^{x,z,v}(T)$ is surjective if $\operatorname{Rank}(M)=\ell$ and \eqref{eq backward} is exactly controllable in the sense that for any $\eta_{T}\in L_{\mathcal{F}_{T}}^2(\Omega;\mathbb{R}^n)$  and $x\in \mathbb{R}^n$, there is $v\in L^2_{\mathbb{F}}(0,T;\mathbb{R}^{m-\ell})$ such that $Y(0;\eta_{T},v)=x$. Note that $\operatorname{Rank}(MD)=\ell$ and $m\ge\ell$ only if $\operatorname{Rank}(M)=\ell$. We have the following result.

\begin{theorem}
Suppose that (A1)  holds true. If $A,B,C,D$ and $M$ are deterministic matrices, then, $u\mapsto MX^{x,u}(T)$ is surjective if
\begin{enumerate} [ (i)]
\item $\operatorname{Rank}(MD)=\ell$;
\item (\ref{eq backward}) is exactly controllable.
\end{enumerate}
\end{theorem}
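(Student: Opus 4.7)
The plan is to follow the reduction sketched in the paragraphs immediately preceding the statement. Given an arbitrary $\alpha\in L^{2}_{\mathcal{F}_{T}}(\Omega;\mathbb{R}^\ell)$, the goal is to exhibit a control $u\in L^{2}_{\mathbb{F}}(0,T;\mathbb{R}^m)$ with $MX^{x,u}(T)=\alpha$ almost surely. First, hypothesis (i) (which forces $\operatorname{Rank}(M)=\ell$) lets me lift $\alpha$ to some $\eta_T\in L^{2}_{\mathcal{F}_{T}}(\Omega;\mathbb{R}^n)$ with $M\eta_T=\alpha$; a concrete choice is $\eta_T=M^{\top}(MM^{\top})^{-1}\alpha$. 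The same rank condition also provides the matrices $K_1\in\mathbb{R}^{m\times m}$ and $K_2\in\mathbb{R}^{m\times n}$ with $MDK_1=(I_\ell,0)$ and $MDK_2=-MC$, from which the reduced coefficients $A_1=A+BK_2$, $A_2$, $B_1$ of the text are built by splitting $BK_1(Mz,v)^{\top}=A_2 z+B_1 v$.

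Next, I invoke hypothesis (ii): exact controllability of \eqref{eq backward} applied to the pair $(\eta_T,x)$ produces a control $v\in L^{2}_{\mathbb{F}}(0,T;\mathbb{R}^{m-\ell})$ and an adapted solution $(Y,z)$ of \eqref{eq backward} satisfying $Y(T)=\eta_T$ and $Y(0)=x$. I would then define the candidate control $u$ by the feedback formula \eqref{control} using this particular $(z,v)$, and let $X^{x,z,v}$ denote the resulting closed-loop state of \eqref{eq controlsys}. By the computation that yields \eqref{eq controlsy}, the algebraic identities $MDK_1=(I_\ell,0)$ and $MDK_2=-MC$ cancel the $X^{x,z,v}$-dependence of the Brownian coefficient of $MX^{x,z,v}$, leaving an SDE for $MX^{x,z,v}$ with initial datum $Mx$ and diffusion $Mz\,\mathrm{d}W$. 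Applying $M$ to \eqref{eq backward} produces an analogous SDE for $MY$ with the same initial datum $Mx$ and the same diffusion $Mz\,\mathrm{d}W$.

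The crux of the argument, and the main obstacle, is to conclude from these two evolutions that $MX^{x,z,v}(T)=MY(T)=M\eta_T=\alpha$. The difference $MX^{x,z,v}-MY$ satisfies a driftless integral equation with source $MA_1(X^{x,z,v}-Y)$ and zero initial condition, so matching the terminal values reduces to identifying $X^{x,z,v}$ with $Y$ on the whole interval $[0,T]$. I would settle this via a forward--backward uniqueness argument: the block structure imposed by $MDK_1=(I_\ell,0)$ and $MDK_2=-MC$ is tailored precisely so that, once the BSDE's martingale-representation term $z$ is fed back into the feedback formula \eqref{control}, the diffusion coefficient $(C+DK_2)Y+DK_1(Mz,v)^{\top}$ of \eqref{eq controlsys} coincides with $z$, and then strong uniqueness for the resulting linear SDE with initial datum $x$ forces $X^{x,z,v}=Y$. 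This identification is the delicate point and the reason the deterministic block structure of $(A,B,C,D,M)$ is used in an essential way. Once $MX^{x,z,v}(T)=\alpha$ is in hand, the surjectivity of $(z,v)\mapsto MX^{x,z,v}(T)$, and hence, by the remark immediately preceding the theorem, the surjectivity of $u\mapsto MX^{x,u}(T)$, is established.
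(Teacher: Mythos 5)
Your reduction follows the paper's construction faithfully up to the crux, but the step you yourself flag as ``the delicate point'' is exactly where the argument breaks, and the mechanism you propose there is false. The hypotheses give only $MDK_{2}=-MC$ and $MDK_{1}=(I_{\ell},0)$, i.e. $M(C+DK_{2})=0$ and $MDK_{1}(Mz,v)^{\top}=Mz$; hence the closed-loop diffusion $(C+DK_{2})Y+DK_{1}(Mz,v)^{\top}$ agrees with the BSDE martingale term $z$ only \emph{after left-multiplication by $M$}, not as $\mathbb{R}^{n}$-valued processes. The discrepancy $\zeta:=(C+DK_{2})Y+DK_{1}(Mz,v)^{\top}-z$ satisfies $M\zeta=0$ but is in general a nonzero $\ker M$-valued process whenever $\ell<n$ (only when $\ell=n$ and $M$ is invertible does $M\zeta=0$ force $\zeta\equiv0$; that is Peng's classical full-rank case, where your argument would indeed go through). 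Consequently strong uniqueness cannot identify $X^{x,z,v}$ with $Y$: the difference $E=X^{x,z,v}-Y$ solves
\begin{equation*}
dE(t)=A_{1}E(t)\,dt+\bigl((C+DK_{2})E(t)+\zeta(t)\bigr)\,dW(t),\qquad E(0)=0,
\end{equation*}
so $E\not\equiv0$ in general, and, as you correctly computed, $ME(T)=\int_{0}^{T}MA_{1}E(t)\,dt$, which has no reason to vanish. Your concluding chain $MX^{x,z,v}(T)=MY(T)=\alpha$ is therefore not established.

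For comparison: the paper itself gives no more than the sketch you elaborated. It derives the projected dynamics \eqref{eq controlsy} and then asserts that ``clearly'' $(z,v)\mapsto MX^{x,z,v}(T)$ is surjective once $\operatorname{Rank}(M)=\ell$ and \eqref{eq backward} is exactly controllable --- which is precisely the implication your forward--backward uniqueness argument was meant to supply; note the paper only ever claims the $M$-projected diffusion identity, never the $n$-dimensional one you assert. So your proposal is an honest attempt to fill the paper's one-line step, but it exposes rather than closes the gap: a trajectory $(Y,z,v)$ of \eqref{eq backward} is a closed-loop trajectory of \eqref{eq controlsys} under the feedback \eqref{control} only if the self-consistency condition $z=(C+DK_{2})Y+DK_{1}(Mz,v)^{\top}$ holds, an extra constraint that exact controllability of \eqref{eq backward} does not by itself deliver. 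Closing the gap requires a genuinely different mechanism, e.g.\ producing constrained solutions of \eqref{eq backward} satisfying that consistency relation, or bypassing trajectory identification altogether by deducing the observability inequality \eqref{eq 381} for the adjoint equation \eqref{eq 34} from the dual characterization of the exact controllability of \eqref{eq backward} and invoking Theorem \ref{th observess estm}.
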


Furthermore, by \cite[Theorem 6.10]{LuZhang2021}, \eqref{eq backward} is exactly controllable if and only if
\begin{align*}
    \operatorname{Rank}\big([B_{1}, A_{1}B_{1}, A_{2}B_{1}, A_{1}^2 B_{1}, A_{1}A_{2}B_{1},A_{2}^2 B_{1}, A_{2}A_{1}B_{1}, \ldots]\big)=n.
\end{align*}
Then, we obtain the following rank condition for the subjectivity of $u\mapsto MX^{x,u}(T)$.

\begin{theorem}
Suppose that (A1) holds true. If $A,B,C,D$ and $M$ are deterministic matrices, then, $u\mapsto MX^{x,u}(T)$ is surjective if
\begin{enumerate} [ (i)]
\item $\operatorname{Rank}(MD)=\ell$;
\item $\operatorname{Rank}\big([B_{1}, A_{1}B_{1}, A_{2}B_{1}, A_{1}^2 B_{1}, A_{1}A_{2}B_{1},A_{2}^2 B_{1}, A_{2}A_{1}B_{1}, \ldots]\big)=n$.
\end{enumerate}
\end{theorem}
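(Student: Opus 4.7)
The proof plan is a straightforward two-step combination of the theorem immediately preceding this statement together with the cited Kalman-type rank characterization from \cite{LuZhang2021}, with no genuinely new work required.

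First, I would invoke the preceding theorem in this section, which established that surjectivity of $u \mapsto MX^{x,u}(T)$ follows from two conditions: that $\operatorname{Rank}(MD) = \ell$ and that the auxiliary backward stochastic equation \eqref{eq backward} is exactly controllable. Condition (i) in the current statement coincides exactly with the first of these requirements, so nothing is left to verify on that front.

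Second, to establish exact controllability of \eqref{eq backward}, I would appeal to \cite[Theorem 6.10]{LuZhang2021}, which is explicitly cited right before the statement. That theorem characterizes exact controllability of backward stochastic equations of precisely the form \eqref{eq backward} by the Kalman-type rank condition
\[
\operatorname{Rank}\big([B_{1},\, A_{1}B_{1},\, A_{2}B_{1},\, A_{1}^{2}B_{1},\, A_{1}A_{2}B_{1},\, A_{2}^{2}B_{1},\, A_{2}A_{1}B_{1},\, \ldots]\big)=n,
\]
which is exactly hypothesis (ii). The matrices $A_1, A_2, B_1$ here are the ones constructed from $A, B, C, D, M$ via the matrices $K_1, K_2$ introduced earlier in this section, so the column spaces in the two rank conditions literally agree.

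Putting these two steps together yields the conclusion: (i) supplies the first hypothesis of the preceding theorem, (ii) together with \cite[Theorem 6.10]{LuZhang2021} supplies the second, and therefore $u \mapsto MX^{x,u}(T)$ is surjective. Since the two deep ingredients — the reduction of forward surjectivity to backward exact controllability, and the Kalman characterization of the latter — are both already in place, no essential obstacle remains; the only thing worth explicitly noting in the write-up is the identification of the matrices $A_1, A_2, B_1$ in (ii) with those feeding into the cited Kalman matrix.
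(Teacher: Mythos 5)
Your proposal is correct and is essentially identical to the paper's own (implicit) argument: the paper derives this theorem precisely by combining the immediately preceding theorem (surjectivity under $\operatorname{Rank}(MD)=\ell$ plus exact controllability of \eqref{eq backward}) with the equivalence from \cite[Theorem 6.10]{LuZhang2021} between exact controllability of \eqref{eq backward} and the stated Kalman-type rank condition on $A_1, A_2, B_1$. Nothing further is needed.
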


\end{document}